\newtheorem{proposition}{Proposition}[section]
\newtheorem{theorem}[proposition]{Theorem}
\newtheorem{corollary}[proposition]{Corollary}
\newtheorem{lemma}[proposition]{Lemma}
\newtheorem{remark}[proposition]{Remark}
\newtheorem{definition}[proposition]{Definition}
\numberwithin{equation}{section}
\newcommand{\nc}{\newcommand}
\newcommand{\rnc}{\renewcommand}
\nc{\I}{{\bf 1}}
\nc{\bA}{\mathbf A}
\nc{\cA}{\mathcal A}
\nc{\cB}{\mathcal B}
\nc{\cC}{\mathcal C}
\nc{\cD}{\mathcal D}
\nc{\disj}{\mathrm{disj}}
\nc{\E}{\mathbb E}
\nc{\fF}{\mathfrak F}
\nc{\cL}{\mathcal L}
\nc{\dom}{\mathrm{dom}}
\nc{\lip}{\mathrm{Lip}}
\nc{\cM}{\mathcal M}
\nc{\N}{\mathbb N}
\nc{\bN}{\mathbf N}
\nc{\cN}{\mathcal N}
\nc{\sN}{\mathsf N}
\rnc{\P}{\mathbb P}
\nc{\bP}{\mathbf P}
\nc{\cP}{\mathcal P}
\nc{\R}{\mathbb R}
\nc{\cS}{\mathcal S}
\nc{\cU}{\mathcal U}
\nc{\bx}{\mathbf x}
\nc\X{\mathbb X}
\nc{\bX}{\mathbf X}
\nc{\cX}{\mathcal X}
\nc{\by}{\mathbf y}
\nc{\Y}{\mathbb Y}
\nc{\bY}{\mathbf Y}
\nc{\cY}{\mathcal Y}
\nc{\Z}{\mathbb Z}
\nc{\bZ}{\mathbf Z}
\nc{\Var}{\mathrm{Var}}
\nc{\pp}[4]{p _{#1,#2}^{#3,#4}}
\nc{\maxp}{\bar{p}}
\nc{\maxsump}{\tilde{p}}
\nc{\size}{X}
\def\keywords{\xdef\@thefnmark{}\@footnotetext}
\title{
	Multivariate Poisson and Poisson process approximations with applications
	\\
	to Bernoulli sums and $U$-statistics
	}
\author{
	
	Federico Pianoforte\footnotemark[1]
	\, and \,
	Riccardo Turin\footnotemark[2]
	}
\begin{document}
	\footnotetext[1]{federico.pianoforte@stat.unibe.ch;  University of Bern, Institute of Mathematical Statistics and Actuarial Science.}
	\footnotetext[2]{riccardo.turin@stat.unibe.ch;  University of Bern, Institute of Mathematical Statistics and Actuarial Science.}
	\maketitle

\begin{abstract}
	This article derives quantitative limit theorems for multivariate Poisson and Poisson process approximations. 
	Employing the solution of Stein's equation for Poisson random variables, we obtain an explicit bound for the multivariate Poisson approximation of random vectors in the Wasserstein distance.
	The bound is then utilized in the context of point processes to provide a Poisson process approximation result in terms of a new metric called $d_\pi$, stronger than the total variation distance, defined as the supremum over all Wasserstein distances between random vectors obtained by evaluating the point processes on arbitrary collections of disjoint sets.
	As applications, the multivariate Poisson approximation of the sum of $m$-dependent Bernoulli random vectors, the Poisson process approximation of point processes of $U$-statistic structure and the Poisson process approximation of point processes with Papangelou intensity are considered.
	Our bounds in $d_\pi$ are as good as those already available in the literature.
\end{abstract}
	
\keywords{2020MSC: \ Primary 60F05,\ Secondary 60G55,\ 62E17.}%
\keywords{ \emph{Keywords:} multivariate Poisson approximation, Poisson process approximation, $m$-dependent random vectors, $U$-statistic, multinomial distribution, Papangelou intensity, Wasserstein distance, Chen-Stein method.}%

\section{Introduction and main results} 

Let $\bX=(X_1,\dots ,X_d)$ be an integrable random vector taking values in $\N_0^d$, $d\in\N$, where $\N_0=\N\cup \{0\}$,	and let $\bP=(P_1,\dots ,P_d)$ be a Poisson random vector,
that is, a random vector  with independent and Poisson distributed components.
The first contribution of this paper is  an upper bound on the Wasserstein distance
\begin{equation*}
\label{eq:WD}
	d_{W}(\bX,\bP)=\sup_{g\in \lip^d(1)}\big|\E[g(\bX)]-\E[g(\bP)]\big|
\end{equation*}
between   $\bX$  and   $\bP$, where $\lip^d(1)$ denotes the set of Lipschitz functions $g:\N_0^d \rightarrow \R$ with Lipschitz constant bounded by $1$ with respect to the metric induced by the $1$-norm $|\bx|_1= \sum_{i=1}^d |x_i|$, for $\bx=(x_1,\dots,x_d)\in\R^d$.

The accuracy of the multivariate Poisson approximation has mostly been studied in terms of the total variation distance; among others we mention \cite{MR972770,bar1988,MR2205332,MR1163825,ekanaviius2020compound,MR1978097,MR3647296}.
In contrast, we consider the Wasserstein distance.
Note that, since the indicator functions defined on $\N_0^d$ are Lipschitz continuous, for random vectors in $\N_0^d$ the Wasserstein distance dominates the total variation distance, and
it is not hard to find sequences that converge in total variation distance but not in Wasserstein distance.
Our goal is to extend the approach developed in \cite{pianoforte2021poisson} for the Poisson approximation of random variables to the multivariate case.

Throughout the paper, for any $\bx=(x_1,\dots,x_d)\in\R^d$ and index $1\leq j\leq d$, we denote by $x_{1:j}$ and $x_{j:d}$ the subvectors $(x_1,\dots,x_j)$ and $(x_j,\dots,x_d)$, respectively.
\begin{theorem}
\label{theo:1}
	Let $\bX=(X_1,\dots ,X_d)$ be an integrable random vector with values in $\N_0^d$, \break $d\in\N$, and let $\bP=(P_1,\dots , P_d)$	be a Poisson random	vector with $\E[\bP]=(\lambda_1,\dots,\lambda_d)\in [0,\infty)^d$.
	For $1\leq i\leq d$, consider any random vector $\bZ^{(i)}=(Z^{(i)}_1,\dots,Z^{(i)}_i)$ in $\Z^i$ defined on the same probability space as $\bX$, and define
	\begin{equation}
	\label{eq: distr of Z}
		q^{(i)}_{m_{1:i}}=m_i\P\big(X_{1:i}=m_{1:i}\big)-\lambda_i\P\big(X_{1:i}+\bZ^{(i)}=(m_{1:i-1}, m_i -1)\big)
	\end{equation}
	for $m_{1:i}\in\N_0^{i}$ with $m_i\neq0$.
	Then,
	\begin{equation}
	\label{eq: thm1}
		d_W(\bX,\bP)\leq\sum_{i=1}^d\left(\lambda_i \E\big|Z^{(i)}_i\big|
		+2\lambda_i\sum_{j=1}^{i-1}\E\big|Z^{(i)}_j\big|
		+\sum_{\substack{m_{1:i}\in\N_0^{i}\\m_i\neq0}}	\left|q^{(i)}_{m_{1:i}}\right|\right).
	\end{equation}
\end{theorem}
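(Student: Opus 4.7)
The plan is to reduce the multivariate Wasserstein bound to $d$ univariate Poisson Stein problems by telescoping, and then apply the one-dimensional technique of \cite{pianoforte2021poisson} in each coordinate. Couple $\bX$ with an independent copy of $\bP$. For any $g \in \lip^d(1)$, I would write
\begin{align*}
\E[g(\bX)] - \E[g(\bP)] = \sum_{i=1}^{d} \bigl( \E[g(X_{1:i}, P_{i+1:d})] - \E[g(X_{1:i-1}, P_{i:d})] \bigr),
\end{align*}
and integrate out $P_{i+1:d}$ inside each summand to obtain $\E[h_i(X_{1:i})] - \E[h_i(X_{1:i-1}, P_i)]$, where $h_i(\bx_{1:i}) := \E[g(\bx_{1:i}, P_{i+1:d})]$ inherits Lipschitz constant $1$ in every coordinate.

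For each fixed $\bx_{1:i-1}$ I would let $f_i(\bx_{1:i-1}, \cdot)$ solve the univariate Poisson Stein equation
\begin{align*}
\lambda_i f_i(\bx_{1:i-1}, k+1) - k f_i(\bx_{1:i-1}, k) = h_i(\bx_{1:i-1}, k) - \E[h_i(\bx_{1:i-1}, P_i)], \qquad k \geq 0.
\end{align*}
Three Stein-factor bounds are needed: $\|f_i(\bx_{1:i-1}, \cdot)\|_\infty \leq 1$ and $\sup_k |f_i(\bx_{1:i-1}, k+1) - f_i(\bx_{1:i-1}, k)| \leq 1$ (both following from the $1$-Lipschitz property of $h_i(\bx_{1:i-1}, \cdot)$ via the Stein-factor estimates of \cite{pianoforte2021poisson}), together with the sensitivity estimate $|f_i(\bx_{1:i-1} + \bz, k) - f_i(\bx_{1:i-1}, k)| \leq 2|\bz|_1$. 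The factor $2$ is the key multivariate ingredient: the Stein-equation right-hand side depends on $\bx_{1:i-1}$ through both $h_i(\bx_{1:i-1}, k)$ and the centering term $\E[h_i(\bx_{1:i-1}, P_i)]$, so the two $1$-Lipschitz contributions add when one unwinds the explicit Stein formula.

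Stein's identity then converts the $i$-th telescoping term into $\E[\lambda_i f_i(X_{1:i-1}, X_i + 1) - X_i f_i(X_{1:i-1}, X_i)]$. Expanding $\E[X_i f_i(X_{1:i-1}, X_i)]$ as a sum over $m_{1:i}$ with $m_i \neq 0$ and adding and subtracting $\lambda_i \P(X_{1:i} + \bZ^{(i)} = (m_{1:i-1}, m_i - 1)) f_i(m_{1:i-1}, m_i)$ rewrites the whole summand as
\begin{align*}
-\sum_{m_i \neq 0} q^{(i)}_{m_{1:i}} f_i(m_{1:i-1}, m_i) - \lambda_i \E\bigl[f_i(X_{1:i-1} + Z^{(i)}_{1:i-1}, X_i + Z^{(i)}_i + 1) - f_i(X_{1:i-1}, X_i + 1)\bigr].
\end{align*}
Bounding the first sum by $\sum |q^{(i)}_{m_{1:i}}|$ using $\|f_i\|_\infty \leq 1$, and splitting the second difference into a change of the parameter $\bx_{1:i-1}$ (contributing $2\lambda_i \sum_{j < i} \E|Z^{(i)}_j|$) and a change of the Stein argument (contributing $\lambda_i \E|Z^{(i)}_i|$), then summing over $i$, produces \eqref{eq: thm1}.

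The main difficulty is the sensitivity bound with sharp constant $2$, which requires the explicit form of the Poisson Stein solution rather than abstract contraction estimates, because one has to track simultaneously how a perturbation of $\bx_{1:i-1}$ propagates through $h_i(\bx_{1:i-1}, k)$ and through its Poisson centering. A secondary technical issue is the edge event $\{X_i + Z^{(i)}_i < 0\}$, under which the index-shifted sum over $n_i \geq 0$ does not recover the full expectation $\lambda_i \E[f_i(X_{1:i-1} + Z^{(i)}_{1:i-1}, X_i + Z^{(i)}_i + 1)]$; this is absorbed either by extending $f_i$ to negative arguments as zero or by an auxiliary estimate using $\E|Z^{(i)}_i|$.
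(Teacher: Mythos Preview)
Your approach is essentially the paper's: the same telescoping over coordinates, the same univariate Stein equation conditionally on $X_{1:i-1}$ and $P_{i+1:d}$, the same coupling rewrite via \eqref{eq: distr of Z}, and the same decomposition of the remainder into the $q$-sum, a shift in the first $i-1$ parameters, and a shift in the Stein argument. One correction worth noting: the sensitivity estimate $|f_i(\bx_{1:i-1}+\bz,k)-f_i(\bx_{1:i-1},k)|\le 2|\bz|_1$ that you flag as ``the main difficulty'' requiring the explicit Stein formula is in fact trivial and follows from your first Stein factor alone---since $\bz\in\Z^{i-1}$, either $\bz=0$ or $|\bz|_1\ge 1$, and in the latter case $\|f_i\|_\infty\le 1$ already gives the difference $\le 2\le 2|\bz|_1$; the paper records this simply as $H_2\le 2\,\P(Z^{(i)}_{1:i-1}\ne 0)\le 2\sum_{j<i}\E|Z^{(i)}_j|$. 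Your proposed route through the explicit solution, tracking the perturbation through both $h_i$ and its centering, would not obviously deliver a $\lambda$-free constant.
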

For a random variable $X$, Equation \eqref{eq: distr of Z} corresponds to the condition required in \cite[Theorem 1.2]{pianoforte2021poisson}.
There, sharper bounds on the Wasserstein distance for the case of random variables are shown.
However, Theorem~\ref{theo:1} tackles the case of random vectors instead of just considering random variables.

In order to give an interpretation of the hypothesis in Theorem~\ref{theo:1}, let us consider the random vectors
\begin{equation}
\label{eq:size0}
	\bY^{(i)}=\left(X_{1:i-1},X_i+1\right)+\bZ^{(i)}
	,\quad i=1,\dots,d,
\end{equation}
with $\bX$ and $\bZ^{(i)}$ defined as in Theorem \ref{theo:1}.
Under the condition $\P(X_{1:i}+\bZ^{(i)}\in\N_0^i)=1$,
a sequence of real numbers $q^{(i)}_{m_{1:i}}$, $m_{1:i}\in\N_0^{i}$ with $m_i\neq0$,
satisfies Equation \eqref{eq: distr of Z} if and only if
\begin{equation}
\label{eq:size}
	\E[X_i f(X_{1:i})]=\lambda_i\E[f(\bY^{(i)})]
	+\sum_{m_{1:i}\in\N_0^{i},\,m_i\neq0} q^{(i)}_{m_{1:i}}f(m_{1:i})
\end{equation}
for all functions $f:\N_0^i\to\R$ such that $\E\left|X_i f(X_{1:i})\right|<\infty$.
When the $q^{(i)}_{m_{1:i}}$ are all zero and $\E[X_i]=\lambda_i$,
the condition $\P(X_{1:i}+\bZ^{(i)}\in\N_0^i)=1$ is satisfied, as can be seen by taking the sum over $m_{1:i}\in\N_0^{i}$ with $m_i\neq0$ in \eqref{eq: distr of Z}.
In this case, \eqref{eq:size} becomes
\begin{equation}
\label{eq:size2}
	\E[X_i f(X_{1:i})]=\E[X_i]\E[f(\bY^{(i)})].
\end{equation}
Recall that, for a random variable $\size\geq 0$ with mean $\E[\size]>0$, a random variable $\size^s$ defined on the same probability space as $\size$ is a size bias coupling of $\size$ if it satisfies
\begin{equation}
\label{eq:sizeBias-1d}
	\E[\size f(\size)]=\E[\size]\E[f(\size^s)]
\end{equation}
for all measurable $f:\R\to\R$ such that $\E\left|\size f(\size)\right|<\infty$.
Therefore,  if the $q^{(i)}_{m_{1:i}}$ are all zero and $\E[X_i]=\lambda_i$ for $i=1,\dots,d$, the random vectors $\bY^{(1)},\dots,\bY^{(d)}$ can be seen as the size bias coupling of $\bX$, as they are defined on the same probability space as $\bX$ and satisfy \eqref{eq:size2}, which corresponds to \eqref{eq:sizeBias-1d} in the one dimensional case, that is when $i=d=1$.
Note that this suggests a definition of size bias coupling of random vectors that is slightly different from the one introduced in \cite{MR1371949}. 
Following this interpretation, when $\E[\bX]=(\lambda_1,\dots,\lambda_d)$ and the random vectors $\bZ^{(i)}$ are chosen such that the $q_{m_{1:i}}^{(i)}$ are not zero, we can think of the random vectors $\bY^{(i)}$ defined in \eqref{eq:size0}  as an approximate size bias coupling of $\bX$, where instead of assuming that $\bY^{(i)}$ satisfies \eqref{eq:size2} exactly, we allow error terms $q^{(i)}_{m_{1:i}}$.
This is an important advantage of Theorem~\ref{theo:1}, since one does not need to find an exact size bias coupling (in the sense of \eqref{eq:size2}), it only matters that both error terms $q_{m_{1:i}}^{(i)}$ and random vectors $\bZ^{(i)}$ are sufficiently small. 

The second main contribution of our work concerns Poisson process approximation of point processes with finite intensity measure.
For a  point processes $\xi$ and a Poisson process $\eta$ on a measurable space $\X$ with finite intensity measure,
Theorem~\ref{theo:1} provides bounds on the Wasserstein distance
\begin{equation*}
	d_W((\xi(A_1),\dots,\xi(A_d)), (\eta(A_1),\dots,\eta(A_d))\,,
\end{equation*}
where $A_1,\dots,A_d$ are measurable subsets of $\X$.
This allows for a way to compare the distributions of $\xi$ and $\eta$, by taking the supremum of the Wasserstein distances between the point processes evaluated on arbitrary collections $(A_1,\dots,A_d)$ of disjoint sets.
More precisely, let $(\X,\cX)$ be a measurable space and define $\sN_\X$ as the collection of all $\sigma$-finite counting measures.
The set $\sN_\X$ is equipped with the $\sigma$-field $\cN_\X$ generated by the collection of all subsets of $\sN_\X$ of the form
\begin{equation*}
	\{\nu\in\sN_\X\,:\,\nu(B)=k\},\quad B\in\cX,\,k\in\N_0.
\end{equation*}
This means that $\cN_\X$ is the smallest $\sigma$-field on $\sN_\X$ that makes the map $\nu\mapsto \nu(B)$ measurable for all $B\in\cX$.
A point process $\xi$ on $\X$ is a random element in $(\sN_\X,\cN_\X)$.
The intensity of $\xi$ is the measure $\lambda$ on $(\X,\cX)$ defined by $\lambda(B)=\E[\xi(B)], B\in\cX$.
When a point process $\xi$ has finite intensity measure $\lambda$, for any choice of subsets $A_1,\dots,A_d\in\cX$, the random vector $(\xi(A_1),\dots,\xi(A_d))$ takes values in $\N_0^d$ (almost surely).
Thus, we define a metric in the space of point processes with finite intensity measure in the following way.
\begin{definition}
\label{def:OUR}
	Let $\xi$ and $\zeta$ be point processes on $\X$ with finite intensity measure.
	The distance $d_\pi$ between $\xi$ and $\zeta$ is defined as
	\begin{equation*}
		d_{\pi}(\xi,\zeta)=\sup_{(A_1,\dots,A_d)\in \cX^d_{\disj},\,d\in \N }
		d_{W}\big((\xi(A_1),\dots,\xi(A_d)),(\zeta(A_1),\dots,\zeta(A_d))\big),
	\end{equation*}
	where
	\begin{equation*}
		\cX^d_{\disj}=\{(A_1,\dots , A_d)\in \cX^d\,:\, A_i\cap A_j =\emptyset, i\neq j\}.	
	\end{equation*}
\end{definition}
That $d_\pi$ is a probability distance between the distributions of point processes follows immediately from its definition and, e.g., \cite[Proposition 2.10]{MR3791470}.

To the best of our knowledge, this is the first time the distance $d_\pi$ is defined and employed in Poisson process approximation.
We believe that it is possible to extend $d_\pi$ to larger classes of point processes by restricting $\cX^d_{\disj}$ to suitable families of sets.
However, this falls out of the scope of this paper, and it will be treated elsewhere.
Let us now state our main theoretical result on Poisson process approximation.
\begin{theorem}
\label{theo:2}
	Let $\xi$ be a point process on $\X$ with finite intensity measure, and let $\eta$ be a Poisson process on $\X$  with finite intensity measure $\lambda$.
	For any $i$-tuple $(A_1,\dots,A_i)\in \cX^{i}_\disj$ with $i\in\N$, consider a random vector  $\bZ^{A_{1:i}}=(Z^{A_{1:i}}_1,\dots,Z^{A_{1:i}}_i)$ defined on the same probability space as $\xi$ with values in $\Z^i$, and define
	\begin{equation}
	\label{eq:hpThPP}
		\begin{split}
		&q^{A_{1:i}}_{m_{1:i}}=m_i \P\big((\xi(A_1),\dots ,\xi (A_i)) = m_{1:i}\big)
		\\
		&\hspace{4cm}-\lambda(A_i)\mathbb{P}\big((\xi(A_1),\dots ,\xi (A_i)) + \bZ^{A_{1:i}} =(m_{1:i-1}, m_i-1)\big)
		\end{split}
	\end{equation}
	for $m_{1:i}\in\N_0^i$ with	$m_i\neq0$.
	Then, 
	\begin{equation*}
		d_{\pi}(\xi, \eta)\leq\sup_{(A_1,\dots , A_d)\in\cX^d_{\disj},d\in\N}\,\,
		\sum_{i=1}^d \left( \sum_{\substack{m_{1:i}\in\N_0^{i}\\m_i\neq0}}\left|q^{A_{1:i}}_{m_{1:i}}\right|
		+ 2\lambda(A_i) \sum_{j=1}^i \E\big| Z^{A_{1:i}}_{j} \big|  \right).
	\end{equation*}
\end{theorem}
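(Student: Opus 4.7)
The plan is to reduce Theorem~\ref{theo:2} directly to Theorem~\ref{theo:1} by observing that, for each fixed tuple $(A_1,\dots,A_d)\in\cX^d_{\disj}$, the random vector $(\eta(A_1),\dots,\eta(A_d))$ has independent Poisson components. This is the defining property of a Poisson process evaluated on disjoint measurable sets, and it means that $(\eta(A_1),\dots,\eta(A_d))$ plays exactly the role of the target vector $\bP$ in Theorem~\ref{theo:1}, with $\E[\bP]=(\lambda(A_1),\dots,\lambda(A_d))\in[0,\infty)^d$ since $\lambda$ is finite.

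Next, I would fix an arbitrary $(A_1,\dots,A_d)\in\cX^d_{\disj}$ and set $\bX=(\xi(A_1),\dots,\xi(A_d))$, $\bZ^{(i)}=\bZ^{A_{1:i}}$, and $\lambda_i=\lambda(A_i)$. With these identifications, the hypothesis \eqref{eq:hpThPP} in Theorem~\ref{theo:2} is literally the condition \eqref{eq: distr of Z} in Theorem~\ref{theo:1}, and the quantities $q^{(i)}_{m_{1:i}}$ coincide with the $q^{A_{1:i}}_{m_{1:i}}$. Applying Theorem~\ref{theo:1} yields
\begin{equation*}
d_W(\bX,\bP)\leq \sum_{i=1}^d\left(\lambda(A_i)\E\big|Z^{A_{1:i}}_i\big|+2\lambda(A_i)\sum_{j=1}^{i-1}\E\big|Z^{A_{1:i}}_j\big|+\sum_{\substack{m_{1:i}\in\N_0^i\\ m_i\neq 0}}\big|q^{A_{1:i}}_{m_{1:i}}\big|\right).
\end{equation*}
Bounding the first term by $2\lambda(A_i)\E|Z^{A_{1:i}}_i|$ merges it with the second sum, giving $2\lambda(A_i)\sum_{j=1}^{i}\E|Z^{A_{1:i}}_j|$, which matches the form in the statement.

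Finally, I would take the supremum over all $(A_1,\dots,A_d)\in\cX^d_{\disj}$ and $d\in\N$ on both sides. By Definition~\ref{def:OUR}, the left-hand side becomes $d_\pi(\xi,\eta)$, while the right-hand side is exactly the quantity stated in the theorem. Since each step is either a direct identification of notation or a standard property of Poisson processes, there is no substantive obstacle; the only mild subtlety is the bookkeeping that relabels the index-$i$ term of Theorem~\ref{theo:1} so that it fits under the single sum $\sum_{j=1}^{i}$, which costs only a constant factor of $2$ already absorbed in the stated bound.
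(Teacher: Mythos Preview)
Your proposal is correct and follows essentially the same approach as the paper: fix a tuple of disjoint sets, apply Theorem~\ref{theo:1} to $\bX=(\xi(A_1),\dots,\xi(A_d))$ and the Poisson vector $\bP=(\eta(A_1),\dots,\eta(A_d))$ with $\bZ^{(i)}=\bZ^{A_{1:i}}$, then take the supremum. Your explicit remark about absorbing the $j=i$ term into the factor-$2$ sum is exactly the bookkeeping the paper performs tacitly when passing from the bound in Theorem~\ref{theo:1} to the form stated in Theorem~\ref{theo:2}.
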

The Poisson process approximation has mostly been studied in terms of the total variation distance in the literature; see e.g.\ \cite{MR1092983,bar1988,MR1190904,Brow:Xia2001,ChenXia2004,Schu2009,Schu:Stu2014} and references therein.
In contrast, \cite{decr:schu:taele, decr:vass2018} deal with Poisson process approximation using the Kantorovich–Rubinstein distance.
In Proposition~\ref{prop:TVD<OUR}, we establish that the total variation distance 
\begin{equation*}
	d_{TV}(\xi,\zeta)=\sup_{B\in\cN_\X}	|\P(\xi \in B)-\P(\zeta\in B)|
\end{equation*}
between two point processes $\xi$ and $\zeta$ on $\X$ with finite intensity measure is bounded from above by $d_{\pi}(\xi, \zeta)$.
Moreover, since $d_\pi(\xi,\zeta)\geq | \mathbb{E}[\xi(\mathbb X)] - \mathbb{E}[\zeta (\mathbb X)]|$, Example 2.2 in \cite{decr:schu:taele} provides a sequence of point processes $(\zeta_n)_{n\geq 1}$ that converges in total variation distance to a point process $\zeta$ even though $d_\pi(\zeta_n,\zeta)\to\infty$ as $n$ goes to infinity.
This shows that $d_\pi$ is stronger than $d_{TV}$ in the sense that convergence in $d_\pi$ implies convergence in total variation distance, but not vice versa.
The Kantorovich-Rubinstein distance between two point processes $\xi$ and $\zeta$ is defined as the optimal transportation cost between their distributions, when the cost function is the total variation distance between measures; see \cite[Equation 2.5]{decr:schu:taele}.
When the configuration space $\X$ is a locally compact second countable Hausdorff space (lcscH), which is indeed the case considered in \cite{decr:schu:taele} and \cite{decr:vass2018}, the Kantorovich duality theorem (\cite[Theorem 5.10]{Vill2009}) yields an equivalent definition for this metric:
\begin{equation*}
	d_{KR}(\xi,\zeta)=\sup\left| \E[h(\xi)]-\E[h(\zeta)]\right|
\end{equation*} 
where the supremum runs over all measurable functions $h:\sN_\X\to\R$ that are $1$-Lipschitz with respect to the total variation distance between measures and make $h(\xi)$ and $h(\zeta)$ integrable.
For a lcscH space $\X$, we prove in Lemma~\ref{lemma-KR,pi-rel} that $d_\pi\leq 2 d_{KR}$. 	The constant $2$ in this  inequality  cannot be improved, as shown by the following simple example:
let $\X=\{a,b\}$ with $\cX=\{\emptyset,\{a\},\{b\},\X\}$ and let $\delta_a$ and $\delta_b$ be deterministic point processes corresponding to the Dirac measures centered at $a$ and $b$, respectively.
	Since the function $g:(x_1,x_2)\mapsto x_1-x_2$ is $1$-Lipschitz, it follows
	\begin{equation*}
		d_\pi(\delta_a,\delta_b)\geq
		|\,g(\delta_a(\{a\}),\delta_a(\{b\}))- g(\delta_b(\{a\}),\delta_b(\{b\}))\,|=2.
	\end{equation*}
On the other hand, $d_{KR}$ is bounded by the expected total variation distance between the two counting measures, thus $d_{KR}(\delta_a,\delta_b)\leq 1$.
Hence, in this case $d_\pi(\delta_a,\delta_b)=2d_{KR}(\delta_a,\delta_b)$. 	

It remains an open problem whether the distances $d_\pi$ and $d_{KR}$  are equivalent or not.
It is worth mentioning that our general result, Theorem \ref{theo:2}, permits to study the Poisson process approximation of point processes on any measurable space.
Hence,  Theorem \ref{theo:2} can be used to obtain approximation results for point processes also when the notion of weak convergence is not defined.

To demonstrate the versatility of our general main results, we apply them to several examples. 
In Subsection~\ref{Sec:application-eq:mult-dstr}, we approximate the sum of Bernoulli random vectors by a Poisson random vector.
This problem has mainly been studied in terms of the total variation distance and under the assumption that the Bernoulli random vectors are independent (see\,\,e.g.\,\cite{MR1701409}).
We derive an explicit approximation result in the Wasserstein distance for the more general case of $m$-dependent Bernoulli random vectors.

In Subsections~\ref{sec:pap-int}  and \ref{Sec:U-stat}, we apply Theorem~\ref{theo:2} to obtain explicit Poisson process approximation results for point processes with Papangelou intensity and point processes of Poisson $U$-statistic structure.
The latter are point processes that, once evaluated on a measurable set, become Poisson $U$-statistics.
Analogous results were already proven for the Kantorovich-Rubinstein distance in \cite[Theorem 3.7]{decr:vass2018} and  \cite[Theorem 3.1]{decr:schu:taele}, under the additional condition that the configuration space $\X$ is lcscH.
It is interesting to note that the proof of our result for point processes with Papangelou intensity employs Theorem~\ref{theo:2}  with $\bZ^{A_{1:i}}$ set to zero for all $i$, while for point processes of $U$-statistic structure, we find $\bZ^{A_{1:i}}$ such that Equation \eqref{eq:hpThPP} in Theorem~\ref{theo:2} is satisfied with $q^{A_{1:i}}_{m_{1:i}}\equiv 0$ for all collections of disjoint sets.

The proof of Theorem~\ref{theo:1} is based on the Chen-Stein method applied to each component of the random vectors and the coupling in \eqref{eq: distr of Z}.
In the proof of Theorem~\ref{theo:2} we mimic the approach used 
in \cite{MR972770} to prove Theorem 2,
as we derive the process bound as a consequence of the $d$-dimensional bound.

Before we discuss the applications in Section 3, we prove our main results in the next
section.

\section{Proofs of the main results}
\label{Sec:proofs}

Throughout this section, $\bX=(X_1,\dots, X_d)$ is an integrable random vector with values in $\N_0^d$ and	$\bP=(P_{1},\dots , P_d)$ is a Poisson random vector with mean $\E[\bP]=(\lambda_1,\dots,\lambda_d)\in [0,\infty)^d$.
Without loss of generality we assume that $\bX$ and $\bP$ are independent and defined on the same probability space $(\Omega,\fF,\P)$.
We denote by $\lip^d(1)$ the collection of Lipschitz functions $g:\N_0^d\to\R$ with respect to the metric induced by the $1$-norm and Lipschitz constant bounded by $1$, that is
\begin{equation*}
	|g(\bx)-g(\by)|\leq|\bx-\by|_1=\sum_{i=1}^d |x_{i}-y_{i}|,\quad \bx,\by\in\N_0^d.
\end{equation*}	
Clearly, this family of functions contains the $1$-Lipschitz functions with respect to the Euclidean norm.
For $d=1$, we use the convention  $\lip(1)=\lip^1(1)$.

For any fixed $g\in \mathrm{Lip}(1)$, a solution of Stein's equation for Poisson distribution is a real valued function $\widehat{g}^{(\lambda)}:\N_0\to\R$ that satisfies 
\begin{equation}
\label{eq:stein-poisson}
	\lambda\widehat{g}^{(\lambda)}(i+1)-i\widehat{g}^{(\lambda)}(i)
		=g(i)-\E [g(P_\lambda)],\quad i\in\N_0 ,
\end{equation}
where $P_\lambda$ is a Poisson random variable	with mean $\lambda\geq 0$.
We fix for convenience the initial condition $\widehat{g}^{(\lambda)}(0)=0$.
With this assumption, the function $\widehat{g}^{(\lambda)}$ is unique and may be obtained by solving \eqref{eq:stein-poisson} recursively  on $i$.
An explicit expression for  this solution is given in  \cite[Lemma 1]{bar1988}.
The following lemma is a direct consequence of \cite[Theorem 1.1]{MR2274850}
(Note that the case $\lambda=0$ is trivial).
\begin{lemma}
\label{lem:stein-poisson}
	For any $\lambda\geq0$ and $g\in\lip(1)$, let $\widehat{g}^{(\lambda)}$ be the solution of the Stein equation \eqref{eq:stein-poisson} with  initial condition  $\widehat{g}^{(\lambda)}(0)=0$.
	Then,
	\begin{equation}
	\label{eq:magic-factors}
		\sup_{i\in\N_0}\left|\widehat{g}^{(\lambda)}(i)\right|\leq 1
		\quad \text{and}\quad
		\sup_{i\in\N_0}\left|\widehat{g}^{(\lambda)}(i+1)-\widehat{g}^{(\lambda)}(i)\right|\le 1.
	\end{equation}  
\end{lemma}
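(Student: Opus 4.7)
The plan is to split on whether $\lambda=0$ or $\lambda>0$, handling the degenerate case by direct computation and invoking the cited magic-factor result for $\lambda>0$.

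For $\lambda=0$, the variable $P_0$ is deterministically zero, so $\E g(P_0)=g(0)$, and Stein's equation \eqref{eq:stein-poisson} reduces to $-i\,\widehat g^{(0)}(i)=g(i)-g(0)$ for $i\geq 1$, together with the initial condition $\widehat g^{(0)}(0)=0$. Since $g\in\lip(1)$ gives $|g(i)-g(0)|\leq i$, one obtains $|\widehat g^{(0)}(i)|\leq 1$ directly. For the first differences, at $i=0$ we have $|\widehat g^{(0)}(1)-\widehat g^{(0)}(0)|=|g(1)-g(0)|\leq 1$, and for $i\geq 1$ a short manipulation gives
\begin{equation*}
\widehat g^{(0)}(i+1)-\widehat g^{(0)}(i)=\frac{(g(i)-g(0))-i(g(i+1)-g(i))}{i(i+1)},
\end{equation*}
whose absolute value is bounded by $(i+i)/(i(i+1))=2/(i+1)\leq 1$.

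For $\lambda>0$, the plan is simply to invoke \cite[Theorem 1.1]{MR2274850}, which provides the sharp ``magic factor'' bounds for the Stein solution of the Poisson distribution. That theorem controls $\sup_i|\widehat g^{(\lambda)}(i)|$ and $\sup_i|\widehat g^{(\lambda)}(i+1)-\widehat g^{(\lambda)}(i)|$ by constants times $\sup_i|g(i+1)-g(i)|$, with constants that do not exceed $1$ uniformly in $\lambda>0$. Combined with $\sup_i|g(i+1)-g(i)|\leq 1$ coming from $g\in\lip(1)$, this delivers both inequalities in \eqref{eq:magic-factors}.

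The only anticipated obstacle is matching the precise formulation of the cited theorem to our conventions (in particular the initial condition $\widehat g^{(\lambda)}(0)=0$ and the Lipschitz rather than bounded hypothesis on $g$), and confirming that the associated constants are indeed at most $1$ uniformly in $\lambda>0$. Once this bookkeeping is done, the lemma is immediate.
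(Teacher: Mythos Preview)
Your proposal is correct and follows essentially the same approach as the paper: the paper's entire argument is the single sentence that the lemma ``is a direct consequence of \cite[Theorem 1.1]{MR2274850} (Note that the case $\lambda=0$ is trivial).'' You supply the explicit computation for $\lambda=0$ that the paper omits as trivial, and for $\lambda>0$ you invoke the same cited result.
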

Recall that, for any $\bx=(x_1,\dots,x_d)\in\R^d$ and some index $1\leq j\leq d$, we write $x_{1:j}$ and $x_{j:d}$ for the subvectors $(x_1,\dots,x_j)$ and $(x_j,\dots,x_d)$, respectively.
For $g\in\lip^d(1)$, let $\widehat{g}^{(\lambda)}_{x_{1:i-1}|x_{i+1:d}}$ denote	the solution to \eqref{eq:stein-poisson} for the Lipschitz function	$g(x_{1:i-1},\cdot\ ,x_{i+1:d})$ with fixed $x_{1:i-1}\in\N_0^{i-1}$ and $x_{i+1:d}\in\N_0^{d-i}$. 
Since $\widehat{g}^{(\lambda)}$ takes vectors from $\N_0^d$ as input, we do not need to worry about measurability issues.
The following proposition is the first building block for the proof of Theorem~\ref{theo:1}.
\begin{proposition}
\label{prop:1}
	For any $g\in\lip^d(1)$, 
	\begin{equation*}
		\E[g(\bP) - g(\bX)]=\sum_{i=1}^d\E\left[X_i\widehat{g}^{(\lambda_i)}_{X_{1:i-1}|P_{i+1:d}}(X_i)
		-\lambda_i\widehat{g}^{(\lambda_i)}_{X_{1:i-1}|P_{i+1:d}}(X_i+1)\right].
	\end{equation*}
\end{proposition}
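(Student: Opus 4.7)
The plan is to prove the identity by a telescoping argument that swaps the components of $\bX$ for those of $\bP$ one coordinate at a time, combined with an application of Stein's equation at each swap.

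First I would define the interpolating ``hybrid'' vectors
\[
\bW^{(i)}=(X_{1:i},P_{i+1:d}),\qquad i=0,1,\dots,d,
\]
with the conventions $\bW^{(0)}=\bP$ and $\bW^{(d)}=\bX$. Then I would write the telescoping identity
\[
\E[g(\bP)]-\E[g(\bX)]=\sum_{i=1}^d \bigl(\E[g(\bW^{(i-1)})]-\E[g(\bW^{(i)})]\bigr),
\]
so that the $i$-th summand is the expected difference $\E[g(X_{1:i-1},P_i,P_{i+1:d})-g(X_{1:i-1},X_i,P_{i+1:d})]$.

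Next, for each $i$ I would condition on the pair $(X_{1:i-1},P_{i+1:d})$. Because $\bX$ and $\bP$ are assumed independent and defined on the same probability space, and because the coordinates of $\bP$ are independent, the conditional distribution of $P_i$ given $(X_{1:i-1},P_{i+1:d})$ is $\mathrm{Poisson}(\lambda_i)$. Applying Stein's equation \eqref{eq:stein-poisson} to the Lipschitz function $g(X_{1:i-1},\,\cdot\,,P_{i+1:d})$, evaluated at the random argument $X_i$, yields the pointwise identity
\[
\lambda_i\widehat{g}^{(\lambda_i)}_{X_{1:i-1}|P_{i+1:d}}(X_i+1)-X_i\widehat{g}^{(\lambda_i)}_{X_{1:i-1}|P_{i+1:d}}(X_i)=g(X_{1:i-1},X_i,P_{i+1:d})-\E\bigl[g(X_{1:i-1},P_i,P_{i+1:d})\,\bigm|\,X_{1:i-1},P_{i+1:d}\bigr].
\]
Taking expectation and applying the tower property recovers the $i$-th summand of the telescoping decomposition in the required form.

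Summing over $i=1,\dots,d$ gives the claimed identity. The main technical point to verify is that all the expectations above are finite so that the conditional expectations and Stein identity can be manipulated freely: this follows from integrability of $\bX$ and $\bP$ together with the bound $|\widehat g^{(\lambda)}|\le 1$ from Lemma~\ref{lem:stein-poisson}, which controls each term of the telescoping sum uniformly. No truly difficult step is involved; the care lies in stating cleanly that $P_i$ is $\mathrm{Poisson}(\lambda_i)$ independent of $(X_{1:i-1},P_{i+1:d})$ so that Stein's equation can be applied componentwise.
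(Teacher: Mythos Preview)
Your proposal is correct and essentially identical to the paper's own proof: both use the same telescoping decomposition $\E[g(\bP)-g(\bX)]=\sum_{i=1}^d\E[g(X_{1:i-1},P_{i:d})-g(X_{1:i},P_{i+1:d})]$ and then apply Stein's equation \eqref{eq:stein-poisson} componentwise after integrating out $P_i$ using its independence from $(X_{1:i-1},P_{i+1:d})$. The only cosmetic difference is that the paper writes $\E^{P_i}[\cdot]$ in place of your conditional expectation given $(X_{1:i-1},P_{i+1:d})$, and does not spell out the integrability check you mention.
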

\begin{proof}[Proof of Proposition~\ref{prop:1}]
	First, observe that
	\begin{equation}
	\label{eq: stepProofProp1}
		\E\left[g(\bP)-g(\bX)\right]
		=\sum_{i=1}^d\E\left[g(X_{1:i-1},P_{i:d})-g(X_{1:i},P_{i+1:d})\right].
	\end{equation}
	The independence of $P_i$ from $P_{i+1:d}$ and $X_{1:i}$ implies
	\begin{equation*}
		\E\big[g(X_{1:i-1},P_{i:d})- g(X_{1:i},P_{i+1:d})	\big]
		=\E\big[\E^{P_i}[g(X_{1:i-1},P_{i:d})]	- g(X_{1:i},P_{i+1:d})\big],
	\end{equation*}
	where $\E^{P_i}$ denotes the expectation	with respect to the random variable $P_i$.
	From the definition of $\widehat{g}^{(\lambda_i)}_{x_{1:i-1}|x_{i+1:d}}$ with $x_{1:i-1}=X_{i:i-1}$ and $x_{i+1:d}=P_{i+1:d}$,	it follows 
	\begin{equation*}
		\E^{P_i}[g(X_{1:i-1}, P_{i:d})]- g(X_{1:i},P_{i+1:d})
		=X_i\widehat{g}^{(\lambda_i)}_{X_{1:i-1}|P_{i+1:d}}(X_i)
		-\lambda_i\widehat{g}^{(\lambda_i)}_{X_{1:i-1}|P_{i+1:d}}(X_i+1)
	\end{equation*}
	for all $i=1,\dots, d$.
	Together with \eqref{eq: stepProofProp1}, this leads to the desired conclusion.
\end{proof}
\begin{proof}[Proof of Theorem~\ref{theo:1}]
	In view of Proposition~\ref{prop:1}, it suffices to bound
	\begin{equation*}
		\left|\E\left[X_i \widehat{g}^{(\lambda_i)}_{X_{1:i-1}|P_{i+1:d}}(X_i)
		-\lambda_i\widehat{g}^{(\lambda_i)}_{X_{1:i-1}|P_{i+1:d}}(X_i+1)\right]\right|
		,\quad i=1,\dots,d\,.
	\end{equation*}
	For the remaining of the proof,	the index $i$ is fixed and we omit the superscript $(i)$ in $Z_{i:d}^{(i)}$ and $q^{(i)}_{m_{1:i}}$.
	Define the function
	\begin{equation*}
		h(X_{1:i})=\E\left[\widehat{g}^{(\lambda_i)}_{X_{1:i-1}|P_{i+1:d}}(X_i)\,\big|\,X_{1:i}\right],
	\end{equation*}
	where $\E[\ \cdot\ |\, Y]$ denotes the conditional expectation with respect to a random element $Y$.
	With the convention	$\widehat{g}^{(\lambda_i)}_{m_{1:i-1}|m_{i+1:d}}(m_i)=0$  if $m_{1:d}\notin\N_0^d$ or $m_i=0$, it follows from \eqref{eq: distr of Z} that
	\begin{align*}
		&\E\left[X_i \widehat{g}^{(\lambda_i)}	_{X_{1:i-1}|P_{i+1:d}}(X_i)\right]
		=\E[X_i h(X_{1:i})]
		=\sum_{m_{1:i}\in\N_0^{i}}m_i h(m_{1:i})\P(X_{1:i}=m_{1:i})
		\\
		&=\sum_{\substack{m_{1:i}\in\N_0^{i}\\m_i\neq0}}h(m_{1:i})q_{m_{1:i}}
		+\lambda_i\sum_{\substack{m_{1:i}\in\N_0^{i}\\m_i\neq0}}h(m_{1:i})
		\P\left(X_{1:i}+Z_{1:i}=(m_{1:i-1},m_{i}-1)\right)
		\\
		&=\sum_{\substack{m_{1:i}\in\N_0^{i}\\m_i\neq0}}h(m_{1:i})q_{m_{1:i}}
		+\lambda_i\E\left[	\widehat{g}^{(\lambda_i)}_{X_{1:i-1}+Z_{1:i-1}|P_{i+1:d}}(X_i+Z_i+1)\right].
	\end{align*}
	Since $|h(X_{1:i})|\leq 1$ by \eqref{eq:magic-factors}, the triangle inequality establishes
	\begin{equation}
	\label{eq:bound1}
		\left|\E\left[X_i \widehat{g}^{(\lambda_i)}_{X_{1:i-1}|P_{i+1:d}}(X_i)
		-\lambda_i\widehat{g}^{(\lambda_i)}_{X_{1:i-1}|P_{i+1:d}}(X_i+1)\right]\right|
		\leq\sum_{\substack{m_{1:i}\in\N_0^{i}\\m_i\neq0}}\left|q_{m_{1:i}}\right|+\lambda_i(H_1+H_2),
	\end{equation}
	with
	\begin{equation*}
		H_1=\left|\E\left[\widehat{g}^{(\lambda_i)}	_{X_{1:i-1}+ Z_{1:i-1}|P_{i+1:d}}(X_i + Z_i + 1)
		-\widehat{g}^{(\lambda_i)}_{X_{1:i-1}+ Z_{1:i-1}|P_{i+1:d}}(X_i+1)\right]\right|
	\end{equation*}
	and
	\begin{equation*}
		H_2=\left|\E\left[\widehat{g}^{(\lambda_i)}_{X_{1:i-1}+ Z_{1:i-1}|P_{i+1:d}}(X_i+1)
		-\widehat{g}^{(\lambda_i)}_{X_{1:i-1}|P_{i+1:d}}(X_i+1)\right]\right|.
	\end{equation*}
	The inequalities in \eqref{eq:magic-factors} guarantee
	\begin{equation*}
		H_1\leq \E|Z_i|	\quad\text{and}\quad
		H_2\leq2\P(Z_{1:i-1}\neq0)\leq\sum_{j=1}^{i-1} 2\P(Z_j\neq0)\leq 2\sum_{j=1}^{i-1} \E|Z_j|.
	\end{equation*}
	Combining \eqref{eq:bound1} with the	bounds for $H_1$ and $H_2$, and summing over $i=1,\dots ,d$ concludes the proof.
\end{proof}
\begin{remark}
It follows directly from the previous proof that the bound \eqref{eq: thm1} in Theorem~\ref{theo:1} can be improved in the following way:
	\begin{equation*}
		d_W(\bX,\bP)\leq\sum_{i=1}^d\left(	\lambda_i \E\big|Z^{(i)}_i\big|
		+2\lambda_i \P\big(Z^{(i)}_{1:i-1}\neq 0\big)
		+\sum_{\substack{m_{1:i}\in\N_0^{i}\\m_i\neq0}}	\left|q^{(i)}_{m_{1:i}}\right|\right).
	\end{equation*}
\end{remark}
Next, we derive Theorem~\ref{theo:2} from Theorem~\ref{theo:1}.
\begin{proof}[Proof of Theorem~\ref{theo:2}]
	Let $d\in\N$ and $\bA=(A_1,\dots,A_d)\in\cX_\disj^d$.
	Define
	\begin{equation*}
		\bX^\bA=(\xi(A_1),\dots,\xi(A_d))
		\quad\text{and}\quad
		\bP^\bA=(\eta(A_1),\dots,\eta(A_d)),
	\end{equation*}
	where $\bP^\bA$ is a Poisson random vector with mean $\E[\bP^\bA]=(\lambda(A_1),\dots,\lambda(A_d))$.
	By Theorem~\ref{theo:1} with $\bZ^{(i)}=\bZ^{A_{1:i}}$, we obtain
	\begin{equation*}
		d_W(\bX^\bA,\bP^\bA)\leq\sum_{i=1}^d\left(
		\sum_{\substack{m_{1:i}\in\N_0^{i}\\m_i\neq0}}\left|q^{A_{1:i}}_{m_{1:i}}\right|
		+2\lambda(A_i)\sum_{j=1}^i\E|Z^{A_{1:i}}_j|\right).
	\end{equation*}
	Taking the supremum over all $d$-tuples of disjoint measurable sets concludes the proof.
\end{proof}
Let us now prove that the total variation distance is dominated by $d_\pi$.
Recall that the total variation distance between two point processes $\xi$ and $\zeta$ on $\X$ is
\begin{equation}
\label{eq:TVD-pointpr}
	d_{TV}(\xi,\zeta)=\sup_{B\in\cN_\X}	|\P(\xi \in B)-\P(\zeta\in B)|\,.
\end{equation}
The result is obtained by a monotone class Theorem, \cite[Theorem 1.3]{lieb2001analysis}, which is stated hereafter as a Lemma.
A monotone class $\cA$ is	a collection of sets closed under monotone 	limits, that is, for any $A_1,A_2,\ldots\in\cA$ 	with $A_n\uparrow A$ or $A_n\downarrow A$,	then $A\in\cA$.
\begin{lemma}
\label{lem:MCT}
	Let $U$ be a set and let $\cU$ be an algebra of subsets of $U$.
	Then, the monotone class generated by $\cU$ coincides with the $\sigma$-field generated by $\cU$.
\end{lemma}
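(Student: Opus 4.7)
The plan is to establish both inclusions. Since every $\sigma$-field is a monotone class, the monotone class $\cM$ generated by $\cU$ automatically satisfies $\cM\subseteq\sigma(\cU)$. The real work is the reverse direction: I would prove that $\cM$ is itself a $\sigma$-field, which then forces $\sigma(\cU)\subseteq\cM$.

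Because $\cM$ is already closed under monotone limits, it suffices to show that it is an algebra, i.e.\ closed under complements and finite unions. Once that holds, any countable union $\bigcup_{n\in\N} A_n$ of elements of $\cM$ is the increasing limit of $A_1\cup\cdots\cup A_n\in\cM$, so it lies in $\cM$ by monotonicity. Moreover, because $U\in\cU\subseteq\cM$, closure under complements reduces to closure under set difference.

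The main step, and the point I expect to be the obstacle, is a two-round ``good sets'' argument. For each $A\subseteq U$, introduce the auxiliary family
\[
	\cC_A=\{B\in\cM\,:\,A\cap B,\ A\setminus B,\ B\setminus A\in\cM\},
\]
and verify directly that $\cC_A$ is a monotone class; this only uses that $\cap$ and $\setminus$ commute with increasing and decreasing limits of sets. Note the symmetry $B\in\cC_A\Leftrightarrow A\in\cC_B$. In the first round I fix $A\in\cU$; since $\cU$ is an algebra, $\cU\subseteq\cC_A$, and by minimality of $\cM$ this forces $\cM\subseteq\cC_A$. Hence the three operations between any $A\in\cU$ and any $B\in\cM$ produce elements of $\cM$. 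In the second round I fix $A\in\cM$; by the symmetry just noted together with the conclusion of the first round, every $B\in\cU$ belongs to $\cC_A$. Therefore $\cU\subseteq\cC_A$, and once more $\cM\subseteq\cC_A$, so $\cap$ and $\setminus$ of arbitrary elements of $\cM$ remain in $\cM$. This bootstrap from $\cU$ to $\cM$ is the delicate part, because one cannot directly invoke the algebra property of $\cU$ for arbitrary $A\in\cM$.

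Combining these closures with $U\in\cM$ yields closure under complements, and hence under finite unions via De Morgan. Therefore $\cM$ is an algebra; together with the monotone class property this makes it a $\sigma$-field containing $\cU$, so $\sigma(\cU)\subseteq\cM$, completing the proof.
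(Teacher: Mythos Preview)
Your argument is correct and is in fact the standard proof of the monotone class theorem. Note, however, that the paper does not supply its own proof of this lemma: it is quoted verbatim as \cite[Theorem 1.3]{lieb2001analysis} and used as a black box in the proof of Proposition~\ref{prop:TVD<OUR}. So there is no ``paper's proof'' to compare against; your two-round good-sets argument simply fills in the details that the paper outsources to the reference.
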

\begin{proposition}
	\label{prop:TVD<OUR}
	Let $\xi $ and $\zeta$ be two point processes on $\X$ with finite intensity measure.
	Then,
	\begin{equation*}
		d_{TV}(\xi, \zeta)\leq d_{\pi}(\xi, \zeta) .
	\end{equation*}
\end{proposition}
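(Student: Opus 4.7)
The plan is to bound $|\P(\xi\in B)-\P(\zeta\in B)|$ by $d_\pi(\xi,\zeta)$ first on a convenient generating class of $\cN_\X$ and then to propagate the bound to all of $\cN_\X$ via Lemma~\ref{lem:MCT}.

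The starting observation is that for any $\bA=(A_1,\dots,A_d)\in\cX^d_\disj$ and any subset $C\subseteq\N_0^d$, the indicator $\I_C:\N_0^d\to\R$ is automatically $1$-Lipschitz with respect to the $1$-norm, since two distinct points of $\N_0^d$ are at $|\cdot|_1$-distance at least $1$. Consequently, for the cylinder set
\[
B_{\bA,C}=\{\nu\in\sN_\X:(\nu(A_1),\dots,\nu(A_d))\in C\}\in\cN_\X,
\]
the Wasserstein distance and Definition~\ref{def:OUR} yield
\[
|\P(\xi\in B_{\bA,C})-\P(\zeta\in B_{\bA,C})|\leq d_W\big((\xi(A_1),\dots,\xi(A_d)),(\zeta(A_1),\dots,\zeta(A_d))\big)\leq d_\pi(\xi,\zeta).
\]

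Next, let $\cU$ denote the collection of all such cylinder sets $B_{\bA,C}$ as $d\in\N$, $\bA\in\cX^d_\disj$ and $C\subseteq\N_0^d$ vary. I would check that $\cU$ is an algebra: complements are immediate from $B_{\bA,C}^c=B_{\bA,\N_0^d\setminus C}$, while an intersection $B_{\bA,C_1}\cap B_{\bA',C_2}$ can be rewritten as a cylinder over the common disjoint refinement of $\{A_1,\dots,A_d\}$ and $\{A_1',\dots,A_e'\}$, namely the nonempty atoms formed by their intersections and set differences; the counts $\nu(A_i)$ and $\nu(A_j')$ are then linear combinations of counts on these atoms, so $B_{\bA,C_1}\cap B_{\bA',C_2}$ lies in $\cU$. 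Since $\cU$ contains all the generators $\{\nu:\nu(A)=k\}$ of $\cN_\X$, this gives $\sigma(\cU)=\cN_\X$.

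Finally, set
\[
\cD=\{B\in\cN_\X:|\P(\xi\in B)-\P(\zeta\in B)|\leq d_\pi(\xi,\zeta)\}.
\]
Continuity of probability measures under monotone limits shows that $\cD$ is a monotone class, and the first step gives $\cU\subseteq\cD$. Lemma~\ref{lem:MCT} implies that the monotone class generated by $\cU$ coincides with $\sigma(\cU)=\cN_\X$, hence $\cN_\X\subseteq\cD$, and taking the supremum over $B\in\cN_\X$ in the definition \eqref{eq:TVD-pointpr} of $d_{TV}$ concludes the proof. The only delicate step is the verification that $\cU$ is an algebra, which rests on the standard common-refinement construction for two finite collections of measurable sets; everything else is a routine application of the monotone class theorem.
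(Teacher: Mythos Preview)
Your strategy is exactly the one the paper uses: indicators on $\N_0^d$ are $1$-Lipschitz, so cylinder events are controlled by $d_\pi$; then a monotone class argument (Lemma~\ref{lem:MCT}) extends the bound to the whole $\sigma$-field. The common-refinement argument for intersections is also the right idea.

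There is, however, a genuine gap in your algebra verification. You work inside $\sN_\X$, the space of all $\sigma$-finite counting measures, and assert $B_{\bA,C}^c=B_{\bA,\N_0^d\setminus C}$. This fails whenever some $\nu\in\sN_\X$ has $\nu(A_i)=\infty$: such a $\nu$ lies in $B_{\bA,C}^c$ (since $(\nu(A_1),\dots,\nu(A_d))\notin\N_0^d\supseteq C$) but not in $B_{\bA,\N_0^d\setminus C}$. Concretely, for $\X=\N$ the set $\{\nu:\nu(\N)=0\}$ is a cylinder in your class, but its complement $\{\nu:\nu(\N)\geq1\}$ is not, because any cylinder $B_{\bA',C'}$ with $C'\subseteq\N_0^e$ that contains counting measure must be built from finite sets $B_1,\dots,B_e$, and then it cannot separate the zero measure from $\sum_{n\notin\cup B_j}\delta_n$. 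So your $\cU$ is not an algebra in $\sN_\X$.

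The paper circumvents this by first passing to the space $\sN^{<\infty}_\X$ of \emph{finite} counting measures with its trace $\sigma$-field $\cN^{<\infty}_\X$: since $\xi$ and $\zeta$ have finite intensity, they are almost surely finite, and $d_{TV}(\xi,\zeta)=\sup_{B\in\cN^{<\infty}_\X}|\P(\xi\in B)-\P(\zeta\in B)|$. On $\sN^{<\infty}_\X$ every $\nu(A)$ is finite, so the identity $B_{\bA,C}^c=B_{\bA,\N_0^d\setminus C}$ is valid, $\cU$ really is an algebra generating $\cN^{<\infty}_\X$, and your monotone class argument goes through verbatim. Adding this one reduction step at the start of your proof repairs it completely.
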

\begin{proof}
	Let us first introduce the set of finite counting measures
	\begin{equation*}
		\sN^{<\infty}_\X=\{\nu\in\sN_\X:\nu(\X)<\infty\},
	\end{equation*}
	with the trace $\sigma$-field
	\begin{equation*}
		\cN^{<\infty}_\X=\{B\cap\sN^{<\infty}_\X: B\in\cN_\X\}.
	\end{equation*}
	As we are dealing with finite point processes, the total variation distance	is equivalently obtained if $\cN_X$ is replaced	by $\cN^{<\infty}_\X$ in \eqref{eq:TVD-pointpr}:
	\begin{equation*}
		d_{TV}(\xi,\zeta)=\sup_{B\in\cN^{<\infty}_\X}|\P(\xi \in B)-\P(\zeta\in B)|.
	\end{equation*}
	Let $\cP(\N_0^d)$ denote the power set of $\N_0^d$, that is, the	collection of all subsets of $\N_0^d$.
	For any $d\in\N$ and $M\in\cP(\N_0^d)$ 	note that $\I_M(\cdot)\in\lip^{d}(1)$, therefore
	\begin{equation}
	\label{eq:ineq2}
		d_\pi(\xi,\zeta)\geq\sup_{U\in\cU}\left|\P(\xi\in U)-\P(\zeta\in U)	\right|,
	\end{equation}
	with
	\begin{equation*}
		\cU=\left\{\left\{\nu\in\sN^{<\infty}_\X:(\nu(A_1),\dots,\nu(A_d))\in M\right\}:
		d\in\N,\, A\in\cX^d_\disj,\,M\in\cP(\N_0^d)	\right\}\subset\cN^{<\infty}_\X.
	\end{equation*}
	It can be easily verified that $\cU$ is an algebra and $\sigma(\cU)=\cN^{<\infty}_\X$.
	Moreover, by \eqref{eq:ineq2}, $\cU$ is a subset of the monotone class
	\begin{equation*}
		\left\{U\in\cN^{<\infty}_\X:\left|\P(\xi\in U)-\P(\zeta\in U)\right|	\leq d_\pi(\xi,\zeta)\right\}.
	\end{equation*}
	Lemma~\ref{lem:MCT} concludes the proof.
\end{proof}

In the last part of this section, we show that the distance $d_\pi$ is dominated by $2 d_{KR}$ when the underlying space is locally compact second countable Hausdorff (lcscH).
A topological space is second countable if its topology has a countable basis, and it is locally compact if every point has an open neighborhood whose topological closure is compact. 
Suppose that $\X$ is lcscH with Borel $\sigma$-field $\cX$.  Recall that the Kantorovich-Rubinstein distance between two point processes $\xi$ and $\zeta$ on $\X$ with finite intensity measure is given by
\begin{equation*}
	d_{KR}(\xi,\zeta)=\sup_{h\in \mathcal{L}(1)}\left| \E[h(\xi)]-\E[h(\zeta)]\right|,
\end{equation*}
where $\mathcal{L}(1)$ is the set of all measurable functions $h:\sN_\X\to \R$ that are Lipschitz continuous with respect to the total variation distance between measures
\begin{equation*}
	d_{TV,\sN_\X}(\mu,\nu)
	= \sup_{\substack{A\in\cX,\\ \mu(A),\nu(A)<\infty}}\vert \mu(A)-\nu(A)\vert,\quad \mu,\nu \in\sN_\X,
\end{equation*}
with Lipschitz constant bounded by $1$.
Since $\xi$ and $\zeta$ take values in $\sN_\X^{<\infty}$, by \cite[Theorem 1]{MR1562984} we may assume that $h$ is defined on $\sN_\X^{<\infty}$ .
\begin{lemma}
\label{lemma-KR,pi-rel}
	Let $\xi$ and $\zeta$ be two point processes with finite intensity measure on a lcscH space $\X$ with Borel $\sigma$-field $\cX$.
	Then
	\begin{align*}
		 d_{\pi}(\xi, \zeta)\leq 2 d_{KR}(\xi,\zeta).
	\end{align*}
\end{lemma}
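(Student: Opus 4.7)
My plan is to prove the inequality by showing that every test functional arising in the definition of $d_\pi$ can, up to a factor of $2$, be realised as a test functional in the definition of $d_{KR}$. Concretely, for any $d \in \N$, any tuple $(A_1,\dots,A_d) \in \cX^d_\disj$, and any $g \in \lip^d(1)$ (which we may normalise to satisfy $g(\mathbf{0})=0$, since this does not affect differences of expectations), I will introduce
\begin{equation*}
h(\nu) = g(\nu(A_1), \dots, \nu(A_d)), \qquad \nu \in \sN^{<\infty}_\X,
\end{equation*}
and argue that $h/2 \in \mathcal{L}(1)$. The inequality $|\E[g(\xi(A_1),\dots,\xi(A_d))] - \E[g(\zeta(A_1),\dots,\zeta(A_d))]| \le 2\, d_{KR}(\xi,\zeta)$ will then follow, and taking the supremum over $g$, $d$, and the disjoint tuples yields the claim.

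The crux of the argument is to verify that $h$ is $2$-Lipschitz with respect to $d_{TV,\sN_\X}$. For fixed $\mu, \nu \in \sN^{<\infty}_\X$ I will use the Hahn--Jordan decomposition $\mu - \nu = \sigma_+ - \sigma_-$, where $\sigma_+$ and $\sigma_-$ are mutually singular positive measures concentrated on disjoint Hahn sets $B_+$ and $B_-$. Since $g$ is $1$-Lipschitz for the $1$-norm,
\begin{equation*}
|h(\mu) - h(\nu)| \le \sum_{i=1}^d |\mu(A_i) - \nu(A_i)| = \sum_{i=1}^d |\sigma_+(A_i) - \sigma_-(A_i)| \le \sum_{i=1}^d \bigl(\sigma_+(A_i) + \sigma_-(A_i)\bigr).
\end{equation*}
Disjointness of the $A_i$ reduces the right-hand side to $\sigma_+(\cup_i A_i) + \sigma_-(\cup_i A_i) \le \sigma_+(\X) + \sigma_-(\X)$. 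Evaluating the supremum defining $d_{TV,\sN_\X}(\mu,\nu)$ at $B_+$ and at $B_-$ shows $\sigma_+(\X) = (\mu-\nu)(B_+) \le d_{TV,\sN_\X}(\mu,\nu)$ and likewise $\sigma_-(\X) \le d_{TV,\sN_\X}(\mu,\nu)$, so $|h(\mu) - h(\nu)| \le 2\, d_{TV,\sN_\X}(\mu,\nu)$, exactly matching the factor in the statement.

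The remaining points are routine: $h$ is measurable since each $\nu \mapsto \nu(A_i)$ is measurable by construction of $\cN^{<\infty}_\X$ and $g$ is trivially measurable on the discrete set $\N_0^d$; and $h(\xi)$, $h(\zeta)$ are integrable because the normalisation $g(\mathbf{0})=0$ gives $|h(\nu)| \le \sum_{i=1}^d \nu(A_i) \le \nu(\X)$, while $\E[\xi(\X)]$ and $\E[\zeta(\X)]$ are finite by the assumption of finite intensity measure. Finally, the extension result \cite[Theorem 1]{MR1562984} mentioned just above the lemma allows $h$ to be viewed as a function on all of $\sN_\X$, so that $h/2$ is an admissible test function for $d_{KR}$. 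I do not expect a major obstacle beyond the Hahn--Jordan estimate itself, whose tightness is dictated by the disjointness of the $A_i$ and is precisely what forces the constant $2$.
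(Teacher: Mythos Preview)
Your proposal is correct and follows essentially the same route as the paper: define $h(\nu)=g(\nu(A_1),\dots,\nu(A_d))$, show $h$ is $2$-Lipschitz with respect to $d_{TV,\sN_\X}$, and conclude $h/2\in\mathcal L(1)$. The paper records the key inequality $\sum_{i=1}^d |\nu_1(A_i)-\nu_2(A_i)|\le 2\,d_{TV,\sN_\X}(\nu_1,\nu_2)$ in one line, whereas you spell it out via the Hahn--Jordan decomposition and also make the measurability and integrability verifications explicit; these are welcome details but do not constitute a different argument.
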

\begin{proof}
	For $g\in\text{Lip}^d(1)$ and disjoint sets $A_1,\dots,A_d\in\cX, d\in\N,$ define $h:\sN_\X^{<\infty}\to\R$ by $h(\nu)=g(\nu(A_1),\hdots,\nu(A_d))$.
	For finite point configurations $\nu_1$ and $\nu_2$, we obtain
	\begin{align*}
		|h(\nu_1)-h(\nu_2)| &\leq | g(\nu_1(A_1),\dots,\nu_1(A_d)) - 
		g(\nu_2(A_1),\dots,\nu_2(A_d)) | 
		\\
		&\leq \sum_{i=1}^d 
		|\nu_1(A_i)-\nu_2(A_i)| \leq 2 d_{TV,\sN_\X}(\nu_1,\nu_2).
	\end{align*}
	This implies $h/2\in\mathcal{L}(1)$.
	Hence
	$|\mathbb{E}[h(\xi)]-\mathbb{E}[h(\zeta)] |\leq 2d_{KR}(\xi, \zeta)$.
	\end{proof}

\pagebreak

\section{Applications}
\label{Sec: applications}

\subsection{Sum of $m$-dependent Bernoulli random vectors}
\label{Sec:application-eq:mult-dstr}
 
In this subsection, we consider a finite family of Bernoulli random vectors $\bY^{(1)},\dots, \bY^{(n)}$ and  investigate the multivariate Poisson approximation of   $\bX=\sum_{r=1}^n \bY^{(r)}$ in the Wasserstein distance.
If the Bernoulli random vectors are i.i.d., $\bX$ has the so called multinomial distribution.
The multivariate Poisson approximation of the multinomial distribution, and more generally of the sum of  independent Bernoulli random vectors, has already been tackled by many authors in terms of the total variation distance.
Among others, we refer the reader to \cite{MR2205332,dehe:pfei1988,MR1701409,MR3647296}
and the survey \cite{MR3992498}.
Unlike the mentioned papers, we assume that  $\bY^{(1)},\dots, \bY^{(n)}$ are $m$-dependent.
Note that the case of sums of $1$-dependent random vectors has recently been treated in \cite{ekanaviius2020compound} using metrics that are weaker than the total variation distance.
To the best of our knowledge, this is the first paper where    the Poisson approximation of the sum of $m$-dependent Bernoulli random vectors is investigated in terms of the Wasserstein distance.

More precisely, for $n\in\N$, let $\bY^{(1)},\dots,\bY^{(n)}$ be Bernoulli random vectors with distributions given by 
\begin{equation}
\label{eq:distr-categRV}
\begin{aligned}
	\P(\bY^{(r)}=\mathbf{e}_j)&=p_{r,j}\in[0,1],\quad r=1,\dots,n\,,\quad j = 1,\dots,d,\\
	\P(\bY^{(r)}=\mathbf{0})&=1-\sum_{j=1}^d p_{r,j}\in[0,1],\quad r=1,\dots,n,
\end{aligned}
\end{equation}
where $\mathbf{e}_j$ denotes the vector with entry $1$ at position $j$ and entry $0$ otherwise. Assume that $\bY^{(1)},\dots,\bY^{(n)}$ are $m$-dependent for a given fixed $m\in\N_0$.
This means that  for any two subsets $S$ and $T$ of $\{1,\dots,n\}$ such that $\min(S)-\max(T)>m$, the collections $(\bY^{(s)})_{s\in S}$ and $(\bY^{(t)})_{t\in T}$ are independent.
Define the random vector $\bX=(X_1,\dots,X_d)$ as
\begin{equation}
	\label{eq:mult-dstr}
	\bX=\sum_{r=1}^n \bY^{(r)}.
\end{equation}
Note that if $\bY^{(r)},r=1,\dots,n,$ are i.i.d., then $m=0$ and $\bX$ has the multinomial distribution.
The mean vector of $\bX$ is $\E[\bX]=(\lambda_1,\dots,\lambda_d)$ with
\begin{equation}
	\label{eq:lambda}
	\lambda_j=\sum_{r=1}^n p_{r,j},\quad j=1,\dots,d.
\end{equation}
 For $k=1,\dots,n$, let $Q(k)$ be the quantity given by
\begin{equation*}
\label{eq:joint-pr}
Q(k)= \underset{\substack{r\in\{1,\dots, n\}\, : \, 1\leq \vert k-r\vert \leq m\\ i,j=1,\dots,d}}{\max}	\, \mathbb{E}\big[\I\{\bY^{(k)}=\mathbf{e}_i\}\I\{\bY^{(r)}=\mathbf{e}_j\}\big] .
\end{equation*}
We now state the main result of this subsection.
\begin{theorem}
\label{theo:multinomial}
	Let $\bX$ be as in \eqref{eq:mult-dstr}, and let $\bP=(P_1,\dots,P_d)$ be a Poisson random vector with mean $\mathbb{E}[\bP]=(\lambda_1,\dots,\lambda_d)$  given by \eqref{eq:lambda}.
	Then,
	\begin{equation*}
		d_{W}(\bX,\bP) \leq \sum_{k=1}^n \sum_{i=1}^d\bigg[ \sum_{\substack{r=1,\dots, n, \\ \vert r-k\vert \leq m}}
		p_{r,i} + 2\sum_{j=1}^{i-1}\sum_{\substack{r=1,\dots, n, \\ \vert r-k\vert \leq m}}
		p_{r,j}\bigg]p_{k,i}+ 	 2 d(d+1)m  \sum_{k=1}^nQ(k) .
	\end{equation*}
\end{theorem}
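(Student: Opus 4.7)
The plan is to apply Theorem~\ref{theo:1} with a size-bias-type coupling adapted to the $m$-dependence structure. For each $i=1,\dots,d$, let $R_i$ be a random index independent of $(\bY^{(1)},\dots,\bY^{(n)})$ with $\P(R_i=k)=p_{k,i}/\lambda_i$, and set
\[
\bZ^{(i)}\,=\,-\sum_{s:\,|s-R_i|\leq m}\bY^{(s)}_{1:i}\,\in\,\Z^i.
\]
Under this choice $X_{1:i}+\bZ^{(i)}=\sum_{s:\,|s-R_i|>m}\bY^{(s)}_{1:i}$: when $m=0$ this is an exact size-bias coupling of the $i$-th coordinate, while for $m\geq 1$ it merely deletes the terms lying in the dependence range of the size-biasing index, with the resulting discrepancy being absorbed into $q^{(i)}_{m_{1:i}}$.

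The first two contributions in the bound \eqref{eq: thm1} are then immediate. Since $\bZ^{(i)}_j$ is a sum of nonpositive integers, $|Z^{(i)}_j|=\sum_{|s-R_i|\leq m}\bY^{(s)}_j$, and conditioning on $R_i$ yields
\[
\lambda_i\,\E\bigl|Z^{(i)}_j\bigr|\,=\,\sum_{k=1}^{n}p_{k,i}\sum_{s:\,|s-k|\leq m}p_{s,j}.
\]
Inserting these identities into \eqref{eq: thm1} with coefficient $1$ for $j=i$ and coefficient $2$ for $j<i$, and swapping the order of summation in $i$ and $k$, reproduces verbatim the first bracketed sum in the theorem.

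The main task is then to bound $\sum_{m_{1:i}}|q^{(i)}_{m_{1:i}}|$. Setting $\bX^{(r)}=\sum_{|s-r|>m}\bY^{(s)}$ and $W^{(r)}=\sum_{|s-r|\leq m,\,s\neq r}\bY^{(s)}$, the $m$-dependence assumption forces $\bX^{(r)}\perp\bY^{(r)}$; together with \eqref{eq: distr of Z} and the decomposition $\bX=\bX^{(r)}+W^{(r)}+\bY^{(r)}$ this gives
\[
q^{(i)}_{m_{1:i}}\,=\,\sum_{r=1}^n\E\!\left[\I\{\bY^{(r)}=\mathbf{e}_i\}\Bigl(\I\{\bX^{(r)}_{1:i}+W^{(r)}_{1:i}=m_{1:i}-\mathbf{e}_i\}-\I\{\bX^{(r)}_{1:i}=m_{1:i}-\mathbf{e}_i\}\Bigr)\right].
\]
The pointwise estimate $\sum_{m_{1:i}\in\N_0^i}|\I\{a+w=m_{1:i}-\mathbf{e}_i\}-\I\{a=m_{1:i}-\mathbf{e}_i\}|\leq 2\I\{w\neq 0\}$ (zero when $w=0$, otherwise exactly two opposite-sign $\pm 1$ terms), applied with $a=\bX^{(r)}_{1:i}$ and $w=W^{(r)}_{1:i}$, yields
\[
\sum_{m_{1:i}}\bigl|q^{(i)}_{m_{1:i}}\bigr|\,\leq\,2\sum_{r=1}^n\P\bigl(\bY^{(r)}=\mathbf{e}_i,\,W^{(r)}_{1:i}\neq 0\bigr)\,\leq\,4mi\sum_{r=1}^n Q(r),
\]
after a union bound over the at most $2m$ indices $s\in\{r-m,\dots,r+m\}\setminus\{r\}$ and the $i$ coordinates $j\leq i$, combined with $\P(\bY^{(r)}=\mathbf{e}_i,\bY^{(s)}=\mathbf{e}_j)\leq Q(r)$. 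Summing over $i=1,\dots,d$ produces $4m\cdot\tfrac{d(d+1)}{2}\sum_rQ(r)=2md(d+1)\sum_kQ(k)$, the second summand of the stated bound. The main technical subtlety is the $2\I\{w\neq 0\}$ estimate for the indicator difference, without which the resulting constants would be inflated by a factor polynomial in $d$, and the care needed to invoke $m$-dependence only in the form $\bX^{(r)}\perp\bY^{(r)}$, as this is precisely what the paper's definition provides.
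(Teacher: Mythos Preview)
Your proof is correct and follows essentially the same route as the paper's: the same coupling $\bZ^{(i)}=-\bigl(W^{(R_i)}_{1:i}+Y^{(R_i)}_{1:i}\bigr)$ with the size-biased index $R_i$, the same use of $\bX^{(r)}\perp\bY^{(r)}$ from $m$-dependence to identify $q^{(i)}_{m_{1:i}}$, and the same $4mi\sum_k Q(k)$ bound on $\sum_{m_{1:i}}|q^{(i)}_{m_{1:i}}|$. The only cosmetic difference is that you phrase the indicator estimate as a single pointwise bound $\sum_{m_{1:i}}|\I\{a+w=\cdot\}-\I\{a=\cdot\}|\leq 2\I\{w\neq 0\}$, whereas the paper splits the difference into two nonnegative terms before summing; and you should note, as the paper does, that one may assume $\lambda_i>0$ so that $R_i$ is well defined.
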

The proof of Theorem \ref{theo:multinomial} is obtained  by applying Theorem \ref{theo:1}.
When $d=1$, Equation \eqref{eq: distr of Z} corresponds to the condition required in \cite[Theorem 1.2]{pianoforte2021poisson}, which establishes sharper Poisson approximation results than the one obtained in the univariate case from Theorem \ref{theo:1}.
Therefore, for the sum of dependent Bernoulli random variables, a sharper bound for the Wasserstein distance can be derived from \cite[Theorem 1.2]{pianoforte2021poisson}, while for the total variation distance  a bound may be deduced from \cite[Theorem 1.2]{pianoforte2021poisson}, \cite[Theorem 1]{MR972770} and \cite[Theorem 1]{MR978362}.

As a consequence of Theorem~\ref{theo:multinomial},  we obtain the following result for the sum of independent Bernoulli random vectors.
\begin{corollary}
\label{cor:multinomial}
	For  $n\in\N$, let $\bY^{(1)},\dots,\bY^{(n)}$ be independent Bernoulli random vectors with distribution given by \eqref{eq:distr-categRV}, and let $\bX$ be the random vector defined by \eqref{eq:mult-dstr}.
	Let $\bP=(P_1,\dots,P_d)$ be a Poisson random vector with mean $\mathbb{E}[\bP]=(\lambda_1,\dots,\lambda_d)$  given by \eqref{eq:lambda}. Then
	\begin{equation*}
		d_{W}(\bX,\bP)\leq \sum_{k=1}^n \bigg[\sum_{i=1}^d p_{k,i}\bigg]^2 .
	\end{equation*}
\end{corollary}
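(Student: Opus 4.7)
The plan is to derive Corollary~\ref{cor:multinomial} as a direct specialization of Theorem~\ref{theo:multinomial}. First I would observe that a family of independent random vectors is $m$-dependent with $m=0$: the hypothesis that $(\bY^{(s)})_{s\in S}$ and $(\bY^{(t)})_{t\in T}$ are independent whenever $\min(S)-\max(T)>0$ reduces, for $0$-dependence, to ordinary mutual independence of the full collection. Thus Theorem~\ref{theo:multinomial} applies with $m=0$.

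Next I would plug $m=0$ into the bound of Theorem~\ref{theo:multinomial}. The condition $|r-k|\leq m=0$ forces $r=k$, so both inner sums over $r$ collapse to a single term ($p_{k,i}$ and $p_{k,j}$ respectively). The final term $2d(d+1)m\sum_{k=1}^n Q(k)$ vanishes because of the factor $m=0$. What remains is
\begin{equation*}
d_W(\bX,\bP)\leq\sum_{k=1}^n\sum_{i=1}^d\left(p_{k,i}+2\sum_{j=1}^{i-1}p_{k,j}\right)p_{k,i}.
\end{equation*}

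The only thing left is the algebraic identity
\begin{equation*}
\sum_{i=1}^d\left(p_{k,i}+2\sum_{j=1}^{i-1}p_{k,j}\right)p_{k,i}
=\sum_{i=1}^d p_{k,i}^2+2\sum_{1\leq j<i\leq d}p_{k,j}p_{k,i}
=\left(\sum_{i=1}^d p_{k,i}\right)^2,
\end{equation*}
which is just the expansion of a square. Summing over $k$ yields the stated bound. No step is really an obstacle here; the only thing to check carefully is that the $m$-dependence hypothesis is genuinely met by $m=0$ in the independent case, and that the upper triangular double sum recombines into the square of the row sum $\sum_i p_{k,i}$.
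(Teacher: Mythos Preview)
Your proposal is correct and follows exactly the approach the paper intends: the corollary is stated in the paper as a direct consequence of Theorem~\ref{theo:multinomial}, and your specialization $m=0$ together with the algebraic identity recovering the square $\big(\sum_i p_{k,i}\big)^2$ is precisely what is needed.
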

In \cite[Theorem 1]{MR1701409}, a sharper bound for the total variation distance than the one obtained by Corollary~\ref{cor:multinomial} is proven.
When the vectors are identically distributed and $\sum_{j=1}^d p_{1,j}\leq \alpha/n$ for some constant $\alpha>0$, our bound for the Wasserstein distance and the one in \cite[Theorem 1]{MR1701409} for the total variation distance only differ by a constant that does not depend on $n$, $d$ and the probabilities $p_{i,j}$.
\begin{proof}[Proof of Theorem~\ref{theo:multinomial}]
	Without loss of generality we may assume that $\lambda_1,\dots,\lambda_d>0.$ Define the random vectors
	\begin{align*}
	 \mathbf{W}^{(k)}
	 &=\big(W_1^{(k)},\dots, W_d^{(k)}\big) = \sum_{\substack{r=1,\dots, n, \\ 1\leq \vert r-k\vert \leq m} } \bY^{(r)},
	 \\
	 \bX^{(k)}
	  &=\big(X_1^{(k)},\dots, X_d^{(k)} \big)= \bX-\bY^{(k)} - \mathbf{W}^{(k)} ,
	\end{align*}
	for $k=1,\dots,n$.
Let us fix $i=1,\dots,d$ and $\ell_{1:i}\in \N_0^{i}$ with $\ell_i\neq 0$.
	From straightforward calculations it follows that
	\begin{align}
		\ell_i\P(X_{1:i}=\ell_{1:i})&=\E\sum_{k=1}^n\I\{\bY^{(k)}=\mathbf{e}_i\}\I\{X_{1:i}=\ell_{1:i}\}
		\label{eq:1profMul}
		\\
		&=\E\sum_{k=1}^n\I\{\bY^{(k)}=\mathbf{e}_i\}
		\I\big\{X^{(k)}_{1:i}+ W^{(k)}_{1:i}=(\ell_{1:i-1},\ell_{i}-1)\big\} .
		\notag
	\end{align}
	Let $H^{(i)}_{\ell_{1:i}} $ and $q^{(i)}_{\ell_{1:i}}$ be the quantities given by
	\begin{align*}
		H^{(i)}_{\ell_{1:i}}
		&=\E\sum_{k=1}^n\I\{\bY^{(k)}=\mathbf{e}_i\}\I\big\{X^{(k)}_{1:i}=(\ell_{1:i-1},\ell_{i}-1)\big\} ,
		\\
		q^{(i)}_{\ell_{1:i}}&=\ell_i\P(X_{1:i}=\ell_{1:i})-H^{(i)}_{\ell_{1:i}}.
	\end{align*}
	For $i=1,\dots,d$, let $\tau_i$ be a random variable independent of $(\bY^{(r)})_{r=1}^n$	with distribution
	\begin{equation*}
		\P(\tau_i=k)=p_{k,i}/\lambda_i,\quad k=1,\dots,n\,.
	\end{equation*}
	Since $\bY^{(r)}$, $r=1,\dots,n$, are $m$-dependent, the random vectors  $\bY^{(k)}=(Y^{(k)}_1,\dots, Y^{(k)}_d)$ and $\bX^{(k)}$ are independent for all $k=1,\dots,n$. Therefore
	\begin{align*}
		H^{(i)}_{\ell_{1:i}}
		&=\sum_{k=1}^n p_{k,i}\P\big(X^{(k)}_{1:i}=(\ell_{1:i-1},\ell_{i}-1)\big)
		\\
		&=\sum_{k=1}^n p_{k,i}\P\big(X_{1:i} - W^{(k)}_{1:i} - Y^{(k)}_{1:i}  =	( \ell_{1:i-1},\ell_{i}-i)\big)
		\\
		&=\lambda_i\P\big(X_{1:i}-   W^{(\tau_i)}_{1:i} - Y^{(\tau_i)}_{1:i}=( \ell_{1:i-1},\ell_{i}-i)\big) .
	\end{align*}
	Then, by Theorem ~\ref{theo:1} we obtain  
	\begin{equation}
	\label{eq:Main-Thm-x-Mult}
		d_{W}(\bX,\bP)
		\leq\sum_{i=1}^d\bigg( \lambda_i\E\left[W_i^{(\tau_i)} + Y_i^{(\tau_i)}\right]
		+ 2\lambda_i\sum_{j=1}^{i-1}\E\left[ W_j^{(\tau_i)}+Y_j^{(\tau_i)}\right]
		+\sum_{\ell_{1:i}\in\N_{0}^d\atop \ell_i\neq 0}\big\vert q_{\ell_{1:i}}^{(i)}\big\vert\bigg).
	\end{equation}
	From \eqref{eq:1profMul} and the definition of $q_{\ell_{1:i}}^{(i)}$  it follows that  
	\begin{align*}
		| q_{\ell_{1:i}}^{(i)}| 
		&\leq\E\sum_{k=1}^n\I\{\bY^{(k)}=\mathbf{e}_i\}
		\left|\I\big\{X^{(k)}_{1:i}+ W^{(k)}_{1:i}=(\ell_{1:i-1},\ell_{i}-1)\big\}
		-\I\big\{X^{(k)}_{1:i}=(\ell_{1:i-1},\ell_{i}-1)\big\}\right|
		\\
		& \leq  \E\sum_{k=1}^n \I\{\bY^{(k)}=\mathbf{e}_i\}\I\{W_{1:i}^{(k)}\neq 0\}	\I\big\{X^{(k)}_{1:i}+ W^{(k)}_{1:i}=(\ell_{1:i-1},\ell_{i}-1)\big\}
		\\
		&\quad + \E\sum_{k=1}^n\I\{\bY^{(k)}=\mathbf{e}_i\}\I\{W_{1:i}^{(k)}\neq 0\}\I\big\{X^{(k)}_{1:i}=(\ell_{1:i-1},\ell_{i}-1)\big\}.
	\end{align*}
	Thus, by the inequality $\I\{W_{1:i}^{(k)}\neq 0\}\leq  \sum_{j=1}^i W_j^{(k)}$ we obtain 
	\begin{equation}
	\begin{split}
	 \label{eq:bnqQmult}
	 		\sum_{\ell_{1:i}\in\N_0^i \atop \ell_i\neq0} \big\vert q_{\ell_{1:i}}^{(i)}\big\vert
		& \leq 2 \E\sum_{k=1}^n \I\{\bY^{(k)}=\mathbf{e}_i\}\I\{W_{1:i}^{(k)}\neq 0\}
		\\ 
		& \leq 2 \E\sum_{k=1}^n \sum_{j=1}^i \I\{\bY^{(k)}=\mathbf{e}_i\} W_j^{(k)}\leq 4m i\sum_{k=1}^n  Q(k).
	\end{split}
	\end{equation}
	Moreover, for any $i,j=1,\dots, d$ we have  
	\begin{align*}
		\lambda_i\E \Big[W^{(\tau_i)}_j + Y^{(\tau_i)}_j \Big]
		&=\lambda_i\E \sum_{\substack{r=1,\dots, n, \\ \vert r-\tau_i\vert \leq m}}\I\{\bY^{(r)}=\mathbf{e}_j\}
		\\
		&= \sum_{k=1}^n p_{k,i}\,\E \sum_{\substack{r=1,\dots, n, \\ \vert r-k\vert \leq m}}\I\{\bY^{(r)}=\mathbf{e}_j\}
		=\sum_{\substack{k,r=1,\dots, n, \\ \vert r-k\vert \leq m}} p_{k,i}p_{r,j}.
	\end{align*}
	Together with \eqref{eq:Main-Thm-x-Mult} and \eqref{eq:bnqQmult}, this leads to
	\begin{align*}
		d_{W}(\bX,\bP)	&\leq
		\sum_{i=1}^d\sum_{\substack{k,r=1,\dots, n, \\ \vert r-k\vert \leq m}} p_{k,i}p_{r,i}
		+2\sum_{i=1}^d\sum_{j=1}^{i-1}\sum_{\substack{k,r=1,\dots, n, \\ \vert r-k\vert \leq m}} p_{k,i}p_{r,j}
		+  2d(d+1)m  \sum_{k=1}^nQ(k)
		\\
		&\leq \sum_{k=1}^n \sum_{i=1}^d\bigg[ \sum_{\substack{r=1,\dots, n, \\ \vert r-k\vert \leq m}}
		p_{r,i} + 2\sum_{j=1}^{i-1}\sum_{\substack{r=1,\dots, n, \\ \vert r-k\vert \leq m}}
		p_{r,j}\bigg]p_{k,i}+ 	 2d(d+1)m  \sum_{k=1}^nQ(k),
	\end{align*}
	which completes the proof.
\end{proof}

\subsection{Point processes with Papangelou intensity}
\label{sec:pap-int}

Let $\xi$ be a proper point process on a measurable space $(\X,\cX)$, that is, a point process that can be written as $\xi=\delta_{X_1}+\dots+\delta_{X_\tau}$, for  some random elements $X_1,X_2,\dots$ in $\X$ and a random variable $\tau\in\N_0\cup\{\infty\}$.
Note that any Poisson process can be seen as a proper point process, and that all locally finite point processes are proper if $(\X,\cX)$ is a Borel space; see e.g.\,\cite[Corollaries 3.7 and 6.5]{MR3791470}.
The so-called reduced Campbell measure $\cC$ of $\xi$ is defined on the product space $(\X\times\sN_\X,\cX\otimes\cN_\X)$ by
\begin{equation*}
	\cC(A)=\E\int_\X\I_A(x,\xi\setminus x)\,\xi(dx),\quad A\in\cX\otimes\cN_\X ,
\end{equation*}
where $\xi\setminus x$ denotes the point process $\xi-\delta_x$ if $x\in\xi$, and $\xi$ otherwise.
Let $\nu$ be a $\sigma$-finite measure on $(\X,\cX)$ and let $\P_\xi$ be the distribution of $\xi$ on $(\sN_\X,\cN_\X)$.
If $\cC$ is absolutely continuous with respect to	$\nu\otimes\P_{\xi}$,
any density $c$ of $\cC$ with respect to $\nu\otimes\P_{\xi}$ is called (a version of) the Papangelou intensity of $\xi$.
This notion was originally introduced by Papangelou in \cite{MR373000}.
In other words, $c$ is a Papangelou intensity of	$\xi$ relative to the measure $\nu$ if the Georgii–Nguyen–Zessin equation
\begin{align}
\label{GNZ}
	\E\int_\X u(x,\xi\setminus x)\,\xi(dx)=\int_\X\E[c(x,\xi)u(x,\xi)]\nu(dx) ,
\end{align}
is satisfied for all measurable functions $u:\X\times\sN_\X\rightarrow[0,\infty)$.
Intuitively $c(x,\xi)$ is a random variable that measures the interaction between $x$ and $\xi$;
as a reinforcement of this exposition,	it is well-known that if $c$ is deterministic, that is, $c(x,\xi)=f(x)$ for some  positive and measurable function $f$, then $\xi$ is a Poisson process with intensity measure $\lambda (A)=\int_{A}f(x)\nu(dx)$, $A\in \cX$; see e.g.\ \cite[Theorem 4.1]{MR3791470}.
For more details on this interpretation we refer the reader to \cite[Section 4]{decr:vass2018}.
See also \cite{Last:Otto2021} and \cite{Schu:Stu2014} for connections between Papangelou intensity and Gibbs point processes.
We show that for any proper point process $\xi$ that admits Papangelou intensity $c$ relative to a measure $\nu$, the $d_{\pi}$ distance between $\xi$ and a Poisson process with finite intensity measure $\lambda$ absolutely continuous with respect to $\nu$ can be bounded by the distance in	$L^1(\X\times\sN_\X,\nu\otimes\P_\xi)$
between $c$ and the density of $\lambda$.
For a locally compact metric space, Theorem~\ref{theo:Pap-int} yields the same bound as \cite[Theorem 3.7]{decr:vass2018}, but for the metric $d_\pi$ instead of the Kantorovich-Rubinstein distance.
\begin{theorem}
\label{theo:Pap-int}
	Let $\xi$ be a proper point process on $\X$ that admits Papangelou intensity $c$ with respect  to a $\sigma$-finite measure $\nu$ such that $\int_\X\E|c(x,\xi)|\nu(dx)<\infty$.
	Let $\eta$ be a Poisson process on $\X$ with finite intensity measure $\lambda$ having density $f$ with respect to $\nu$.
	Then
	\begin{equation*}
		d_{\pi}(\xi,\eta)\leq\int_\X\E\left|c(x,\xi)-f(x)\right|\nu(dx) .
	\end{equation*}
\end{theorem}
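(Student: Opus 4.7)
The plan is to apply Theorem~\ref{theo:2} with $\bZ^{A_{1:i}}$ taken identically equal to zero; then only the error terms $q^{A_{1:i}}_{m_{1:i}}$ in \eqref{eq:hpThPP} need to be controlled, and everything reduces to writing both $m_i\P(\xi(A_{1:i})=m_{1:i})$ and $\lambda(A_i)\P(\xi(A_{1:i})=(m_{1:i-1},m_i-1))$ as integrals against $\nu$, so that they can be subtracted pointwise.

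The crucial ingredient is the Georgii--Nguyen--Zessin equation \eqref{GNZ}, applied to the test function
\begin{equation*}
u(x,\zeta)=\I\{x\in A_i\}\,\I\big\{(\zeta(A_1),\dots,\zeta(A_i))=(m_{1:i-1},m_i-1)\big\}.
\end{equation*}
Since $A_1,\dots,A_i$ are disjoint, removing a point $x\in A_i$ changes only $\zeta(A_i)$, so the left-hand side of \eqref{GNZ} equals $m_i\P(\xi(A_{1:i})=m_{1:i})$ (the sum over $x\in\xi\cap A_i$ produces the factor $m_i$ on the event $\{\xi(A_i)=m_i\}$). The right-hand side becomes $\int_{A_i}\E[c(x,\xi)\I\{\xi(A_{1:i})=(m_{1:i-1},m_i-1)\}]\,\nu(dx)$. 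On the other hand, since $f$ is the $\nu$-density of $\lambda$, we can trivially write $\lambda(A_i)\P(\xi(A_{1:i})=(m_{1:i-1},m_i-1))=\int_{A_i}\E[f(x)\I\{\xi(A_{1:i})=(m_{1:i-1},m_i-1)\}]\,\nu(dx)$.

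Subtracting yields
\begin{equation*}
q^{A_{1:i}}_{m_{1:i}}=\int_{A_i}\E\big[(c(x,\xi)-f(x))\,\I\{\xi(A_{1:i})=(m_{1:i-1},m_i-1)\}\big]\,\nu(dx).
\end{equation*}
The triangle inequality in $L^1$, followed by summation over $m_{1:i}\in\N_0^i$ with $m_i\neq 0$, collapses the family of indicators to $1$ (the events $\{\xi(A_{1:i})=(m_{1:i-1},m_i-1)\}$ form a partition as $m_{1:i}$ varies), giving
\begin{equation*}
\sum_{m_{1:i}\in\N_0^i,\,m_i\neq 0}\big|q^{A_{1:i}}_{m_{1:i}}\big|\leq \int_{A_i}\E|c(x,\xi)-f(x)|\,\nu(dx).
\end{equation*}

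Finally, Theorem~\ref{theo:2} with $\bZ^{A_{1:i}}\equiv 0$ and the disjointness of $A_1,\dots,A_d$ give
\begin{equation*}
d_\pi(\xi,\eta)\leq\sup_{(A_1,\dots,A_d)\in\cX^d_{\disj},\,d\in\N}\sum_{i=1}^d\int_{A_i}\E|c(x,\xi)-f(x)|\,\nu(dx)\leq\int_\X\E|c(x,\xi)-f(x)|\,\nu(dx),
\end{equation*}
which is the claim. The only genuinely delicate step is the bookkeeping in the GNZ identity: one must verify that the factor $m_i$ arises correctly from counting points in $A_i$ and that the conditioning on $(\xi\setminus x)(A_{1:i})=(m_{1:i-1},m_i-1)$ matches $\xi(A_{1:i})=m_{1:i}$ precisely because $x\in A_i$ and the $A_j$ are disjoint; the rest is integration and summation.
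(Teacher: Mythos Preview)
Your proof is correct and follows essentially the same approach as the paper: apply Theorem~\ref{theo:2} with $\bZ^{A_{1:i}}\equiv 0$, use the GNZ equation \eqref{GNZ} with the test function $u(x,\zeta)=\I_{A_i}(x)\I\{\zeta(A_{1:i})=(m_{1:i-1},m_i-1)\}$ to express $q^{A_{1:i}}_{m_{1:i}}$ as an integral of $c(x,\xi)-f(x)$, then sum over $m_{1:i}$ and over $i$ using disjointness. The only small point you leave implicit is that the hypothesis $\int_\X\E|c(x,\xi)|\,\nu(dx)<\infty$ together with \eqref{GNZ} guarantees that $\xi$ has finite intensity measure, so that $d_\pi(\xi,\eta)$ is well defined.
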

\begin{proof}[Proof of Theorem~\ref{theo:Pap-int}]
	The condition $\int_\X\E|c(x,\xi)|\nu(dx)<\infty$ and Equation~\eqref{GNZ} ensure that $\xi$ has finite intensity measure.
	Consider $i\in\N$ and $(A_1,\dots,A_i)\in\cX_\disj^i$.
	Hereafter,  $\xi(A_{1:i})$ is shorthand notation for $(\xi(A_1),\dots,\xi(A_i))$.
	The idea of the proof is to apply Theorem~\ref{theo:2} with the random vectors $\bZ^{A_{1:i}}$ assumed to be $\mathbf{0}$.
	In this case,
	\begin{align*}
		q^{A_{1:i}}_{m_{1:i}}
		&=m_i\P\big(\xi(A_{1:i})=m_{1:i}\big)-\lambda(A_i)
		\P\big(\xi(A_{1:i})=(m_{1:i-1},m_i-1)\big)
		\\
		&=m_i\P\big(\xi(A_{1:i})=m_{1:i}\big)
		-\int_\X\E\big[f(x)\I_{A_i}(x)
		\I\{\xi(A_{1:i})=(m_{1:i-1},m_i-1)\}\big]
		\nu(dx)
	\end{align*}
	for $m_{1:i}\in\N_0^i$ with $m_i\neq 0$, $i=1,\dots,d$.
	It follows from \eqref{GNZ} that
	\begin{align*}
		m_i\P\big(\xi(A_{1:i})=m_{1:i}\big)
		&=\E\int_\X\I_{A_i}(x)\I\{\xi\setminus x(A_{1:i})=(m_{1:i-1}, m_i -1)\}\,\xi(dx)
		\\
		&=\int_\X\E\big[c(x,\xi)\I_{A_i}(x)	\I\{\xi(A_{1:i})=(m_{1:i-1},m_i -1)\}\big]\nu(dx) ,
	\end{align*}
	hence
	\begin{equation*}
		q^{A_{1:i}}_{m_{1:i}}=	\int_\X\E\big[(c(x,\xi)-f(x))\I_{A_i}(x)
		\I\{\xi(A_{1:i})=(m_{1:i-1},m_i -1)\}\big]\nu(dx) .
	\end{equation*}
	 Theorem~\ref{theo:2} yields
	\begin{equation*}
		d_{\pi}(\xi, \eta)\leq \underset{(A_1,\dots , A_d)\in\cX^d_{\disj},d\in\N}{\sup} \,\,
		\sum_{i=1}^d \sum_{\substack{m_{1:i}\in\N_0^{i}\\m_i\neq0}}
		\left|q^{A_{1:i}}_{m_{1:i}}\right|  .
	\end{equation*}
	The inequalities
	\begin{align*}
		\sum_{m_{1:i}\in\N_0^i\atop m_i\neq 0}
		\left|q^{A_{1:i}}_{m_{1:i}}\right|
		&\leq
		\sum_{m_{1:i}\in\N_0^i, \atop m_i\neq 0}
		\int_\X\E\big[|c(x,\xi)-f(x)|\I_{A_i}(x)
		\I\{\xi(A_{1:i})=(m_{1:i-1},m_i -1)\}\big]\nu(dx)
		\\
		&\leq
		\int_\X\E\Big[|c(x,\xi)-f(x)|\I_{A_i}(x)
		\sum_{m_{1:i}\in\N_0^i \atop m_i\neq 0}
		\I\{\xi(A_{1:i})=(m_{1:i-1},m_i -1)\}\Big]\nu(dx)
		\\
		&\leq\int_\X\E\big[|c(x,\xi)-f(x)|\I_{A_i}(x)\big]\nu(dx)
	\end{align*}
	imply that
	\begin{equation*}
		\sum_{i=1}^d  \sum_{m_{1:i}\in\N_0^i\atop m_i\neq 0}
		\left|q^{A_{1:i}}_{m_{1:i}}\right|
		\leq \int_\X\E\left|c(x,\xi)-f(x)\right|\nu(dx)
	\end{equation*}
	 for any $A_{1:d}\in \cX^d_{\disj}$ with $d\in\N$.
	Thus, we obtain the assertion.
\end{proof}

\subsection{Point processes of Poisson $U$-statistic structure }
\label{Sec:U-stat}

Let $(\X,\cX)$ and $(\Y,\cY)$ be measurable spaces.
For  $k\in\N$ and a symmetric domain $D\in\cX^k$, let $g:D\to\Y$ be a symmetric measurable function, i.e., for any $(x_1,\dots,x_k)\in D$ and any index permutation $\sigma$, $(x_{\sigma(1)},\dots,x_{\sigma{(k)}})\in D$ and $g(x_1,\dots,x_k)=g(x_{\sigma(1)},\dots,x_{\sigma{(k)}})$.
Let $\eta$ be a Poisson process on $\X$ with finite intensity measure $\mu$.
We are interested in the point  process on $\Y$ given by
\begin{equation}
\label{eq:xi-U-stat}
	\xi=\frac{1}{k!}\sum_{(x_1,\dots, x_k)\in\eta^{k}_{\neq}\cap D}\delta_{g(x_1,\dots, x_k)} ,
\end{equation}
where for any $\zeta=\sum_{i\in I}\delta_{x_i}\in\sN_\X$ with $I$ at most countable, $\zeta^k_{\neq}$ denotes the collection of all $k$-tuples $(x_1,\dots,x_k)$ of points from $\zeta$ with pairwise distinct indexes.
The point process $\xi$ has a Poisson $U$-statistic structure in the sense that, for any $B\in\cY$, $\xi(B)$ is a Poisson $U$-statistic of order $k$.
We refer to the	monographs \cite{MR1472486,MR1075417}  for more details on $U$-statistics and their applications to statistics.
Hereafter we discuss the Poisson process approximation in the metric $d_\pi$ for the point process $\xi$.
We prove the exact analogue of \cite[Theorem 3.1]{decr:schu:taele}, with the Kantorovich–Rubinstein distance replaced by $d_\pi$.
Several applications of this result are presented in \cite{decr:schu:taele}, alongside with the case of underlying binomial point processes.
It is worth mentioning that \cite{decr:schu:taele} relies on a slightly less general setup: $\X$ is  assumed to be a locally compact second countable Hausdorff space, while in the present work any measurable space is allowed.

Let $\lambda$ denote the intensity measure of $\xi$, and note that, since $\mu$ is a finite measure on $\X$, by the multivariate Mecke formula $\lambda(\Y)<\infty$.
Define
\begin{equation*}
	R=\max_{1\leq i \leq k -1}\int_{\X^i} \bigg( \int_{\X^{k-i}}
	\I\{  (x_1,\dots , x_k)\in D \}\,\mu^{k-i}(d(x_{i+1},\dots , x_k)) \bigg)^2 \mu^i(d(x_1,\dots,x_i))
\end{equation*}
for $k\geq 2$, and  put $R =0$ for $k=1$.
\begin{theorem}
\label{theo:U-stat}
	Let $\xi$, $\lambda$ and $R$ be as above, and let $\gamma$ be a Poisson process on $\Y$ with intensity measure $\lambda$.
	Then,
	\begin{equation*}
		d_{\pi} ( \xi , \gamma)\leq \frac{2^{k+1}}{ k!}R.
	\end{equation*}
\end{theorem}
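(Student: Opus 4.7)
The plan is to apply Theorem~\ref{theo:2} with random vectors $\bZ^{A_{1:i}}$ chosen so that the errors $q^{A_{1:i}}_{m_{1:i}}$ vanish identically; the natural candidate arises from the multivariate Mecke formula and is essentially a multivariate size-biased coupling of $(\xi(A_1),\dots,\xi(A_i))$. Fix $(A_1,\dots,A_d)\in\cX_\disj^d$ and $i\in\{1,\dots,d\}$; if $\lambda(A_i)=0$ then $\xi(A_i)=0$ almost surely and one sets $\bZ^{A_{1:i}}=0$, so assume $\lambda(A_i)>0$ and let $\mathbf{X}^{A_i}=(X^{A_i}_1,\dots,X^{A_i}_k)$ be a random element of $\X^k$, independent of $\eta$, with density $(k!\,\lambda(A_i))^{-1}\I\{\mathbf{x}\in D,\,g(\mathbf{x})\in A_i\}$ with respect to $\mu^k$. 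For $\mathbf{x}\in D$ define $\eta^{\mathbf{x}}:=\eta+\sum_{\ell=1}^k\delta_{x_\ell}$ and let $\xi^{\mathbf{x}}$ be obtained from $\eta^{\mathbf{x}}$ via the rule \eqref{eq:xi-U-stat}. Splitting the tuples in $(\eta^{\mathbf{x}})^k_{\neq}\cap D$ according to how many entries they share with $\mathbf{x}$ produces the decomposition
\begin{equation*}
	\xi^{\mathbf{x}}=\xi+\delta_{g(\mathbf{x})}+R^{\mathbf{x}},
\end{equation*}
where $R^{\mathbf{x}}$ is $1/k!$ times the counting measure of the tuples that share between $1$ and $k-1$ entries with $\mathbf{x}$. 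Set $Z^{A_{1:i}}_j:=R^{\mathbf{X}^{A_i}}(A_j)$ for $j=1,\dots,i$.

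To check that $q^{A_{1:i}}_{m_{1:i}}\equiv 0$, write $m_i\,\P(\xi(A_{1:i})=m_{1:i})=\E[\xi(A_i)\I\{\xi(A_{1:i})=m_{1:i}\}]$, expand $\xi(A_i)$ as a sum over $\eta^k_{\neq}\cap D$ and apply the multivariate Mecke formula. Since the $A_\ell$'s are disjoint, the condition $g(\mathbf{x})\in A_i$ forces $\delta_{g(\mathbf{x})}(A_{1:i})=(0,\dots,0,1)$, and the previous decomposition gives
\begin{equation*}
	m_i\,\P(\xi(A_{1:i})=m_{1:i})=\tfrac{1}{k!}\int_D\I\{g(\mathbf{x})\in A_i\}\,\P\big(\xi(A_{1:i})+R^{\mathbf{x}}(A_{1:i})=(m_{1:i-1},m_i-1)\big)\,\mu^k(d\mathbf{x}),
\end{equation*}
which by construction of $\mathbf{X}^{A_i}$ coincides with $\lambda(A_i)\,\P(\xi(A_{1:i})+\bZ^{A_{1:i}}=(m_{1:i-1},m_i-1))$.

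For the other term in Theorem~\ref{theo:2}, each $Z^{A_{1:i}}_j\geq 0$ and by disjointness of the $A_j$'s one has $\sum_{j=1}^i Z^{A_{1:i}}_j\leq R^{\mathbf{X}^{A_i}}(\Y)$ and $\sum_{i=1}^d\I\{g(\mathbf{x})\in A_i\}\leq 1$, so
\begin{equation*}
	\sum_{i=1}^d 2\lambda(A_i)\sum_{j=1}^i\E\big|Z^{A_{1:i}}_j\big|\leq \tfrac{2}{k!}\int_D\E[R^{\mathbf{x}}(\Y)]\,\mu^k(d\mathbf{x}).
\end{equation*}
Decomposing $R^{\mathbf{x}}(\Y)$ by the number $i'\in\{1,\dots,k-1\}$ of coordinates shared with $\mathbf{x}$, applying Mecke to the Poisson points, and using the symmetry of $D$ and $g$ in their $k$ arguments, one rewrites the above integral as
\begin{equation*}
	\sum_{i'=1}^{k-1}\frac{\binom{k}{i'}}{(k-i')!}\int_{\X^{i'}}\!\Big(\!\int_{\X^{k-i'}}\I\{(\mathbf{u},\mathbf{z})\in D\}\,\mu^{k-i'}(d\mathbf{z})\Big)^{\!2}\mu^{i'}(d\mathbf{u}).
\end{equation*}
Each summand is bounded by $R$ by definition, and $\sum_{i'=1}^{k-1}\binom{k}{i'}/(k-i')!\leq\sum_{i'=1}^{k-1}\binom{k}{i'}\leq 2^k$, so Theorem~\ref{theo:2}, together with the supremum over all disjoint tuples, gives $d_\pi(\xi,\gamma)\leq 2^{k+1}R/k!$. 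The main obstacle is the factored representation above: one must carefully account, via Mecke, for the multinomial factors produced by choosing which entries of $\mathbf{x}$ appear in a contributing tuple and in which positions, and recognize the resulting integral over $\X^{i'}\times\X^{k-i'}\times\X^{k-i'}$ as the square of a single integral matching the definition of $R$.
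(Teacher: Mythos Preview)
Your proof is correct and follows essentially the same route as the paper: both apply Theorem~\ref{theo:2} with $\bZ^{A_{1:i}}$ built from the Mecke formula so that $q^{A_{1:i}}_{m_{1:i}}\equiv 0$, and both reduce the remaining term to $\frac{2}{k!}\int_D\E[R^{\mathbf{x}}(\Y)]\,\mu^k(d\mathbf{x})$. The only difference is in the last estimate: the paper rewrites this quantity as $2(\Var(\xi(\Y))-\lambda(\Y))$ via the size-bias identity and then invokes \cite[Lemma~3.5]{MR3161465}, whereas you expand the integral combinatorially to $\sum_{i'=1}^{k-1}\binom{k}{i'}/(k-i')!\,R_{i'}$ and bound it directly---a self-contained alternative that avoids the external reference.
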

If the intensity measure $\lambda$ of $\xi$ is the zero measure, then the proof of Theorem~\ref{theo:U-stat} is trivial.
From now on, we assume $0<\lambda(\Y)<\infty$.
The multivariate Mecke formula yields for every $A \in\cY$ that
\begin{align*}
	\lambda(A)=\E[\xi(A)]
	=\frac{1}{k!}\E\underset{\bx\in\eta^{k}_{\neq}\cap D}{\sum}\I\{g (\bx)\in A\}
	=\frac{1}{k!}\int_{D}\I\{g (\bx)\in A\}\,\mu^k(d\bx).
\end{align*}
Define the random element $\bX^A=(X^A_1,\dots, X^A_k)$ in $\X^k$ independent of $\eta$ and distributed according to
\begin{align*}
	&\P\left(\bX^A\in B\right)	=\frac{1}{k! \lambda(A)}\int_D
	{\I\{g(\bx)\in A\}} \I\{\bx\in B \}\,\mu^k(d\bx)
\end{align*}
for all $B$ in the product $\sigma$-field of $\X^k$ when $\lambda(A)>0$,
and set $\bX^A=\bx_0$ for some $\bx_0\in\X^k$ when $\lambda(A)=0$.
For any vector $\bx=(x_1,\dots,x_k)\in\X^k$, denote by $\Delta(\bx)$ the sum of $k$ Dirac measures located by the vector components, that is
\begin{equation*}
	\Delta(\bx)=\Delta(x_1,\dots,x_k)=\sum_{i=1}^k \delta_{x_i}\,.
\end{equation*}
In what follows, for any point process $\zeta$ on $\X$, $\xi(\zeta)$ is the point process defined as in \eqref{eq:xi-U-stat} with $\eta$ replaced by $\zeta$.
Further, like in Section~\ref{sec:pap-int}, $\xi(A_{1:i})$ denotes the random vector $(\xi(A_1),\dots,\xi(A_i))$, for any $A_1,\dots,A_i\in \cY$, $i\in\N$.
\begin{proof}[Proof of Theorem~\ref{theo:U-stat}]
	For $k=1$, Theorem~\ref{theo:U-stat} is a direct consequence of \cite[Theorem 5.1]{MR3791470}. Whence, we assume $k\geq 2$. 
	Let $A_1,\dots, A_i\in\mathcal Y$ with $i\in\N$ be disjoint sets and let $m_{1:i}\in\N_0^i$ with $m_i\neq 0$.
	Suppose $\lambda(A_i)>0$.
	The multivariate Mecke formula implies that
	\begin{equation}
	\label{eq:U-stat-coupl}
	\begin{split}
		&m_i\P(\xi(A_{1:i})=m_{1:i} )  
		=\frac{1}{k!}\E\sum_{\bx\in\eta^{k}_{\neq}\cap D}
		\I\{g(\bx)\in A_i \}\I\{\xi(A_{1:i})=m_{1:i}\}
		\\
		& = \frac{1}{k!}\int_{D}\I\{g(\bx)\in A_i\}
		\P(\xi(\eta + \Delta(\bx))(A_{1:i})=m_{1:i})\,\mu^k(d\bx)
		\\
		& =\frac{1}{k!}\int_D\I\{g(\bx)\in A_i\}
		\P\left(\xi(\eta+\Delta(\bx))(A_{1:i})-\delta_{g(\bx)}(A_{1:i})
		=(m_{1:i-1},m_i -1)\right)\mu^k(d\bx)
		\\
		& =\lambda(A_i)\P\left(
		\xi\left(\eta+\Delta\left(\bX^{A_i}\right)\right)(A_{1:i})-\delta_{g\left(\bX^{A_i}\right)}
		(A_{1:i})=(m_{1:i-1},m_i -1)\right),
	\end{split}
	\end{equation}
	where the second last inequality holds true because $\delta_{g(\bx)}(A_{1:i})$ is the vector  $(0,\dots,0,1)\in\N_0^i$ when $g(\bx)\in A_i$.
	The previous identity is verified also if $\lambda(A_i)=0$.
	Hence, for
	\begin{equation*}
		\bZ^{A_{1:i}}=\xi\left(\eta+\Delta\left(\bX^{A_i}\right)\right)(A_{1:i})
		-\xi(A_{1:i})-\delta_{g\left(\bX^{A_i}\right)}(A_{1:i})\,,
	\end{equation*}
	the quantity $q_{m_{1:i}}^{A_{1:i}}$ defined by Equation \eqref{eq:hpThPP} in Theorem~\ref{theo:2} is zero.
	Note that $\bZ^{A_{1:i}}$ has nonnegative components.
	Hence, for any $d\in\N$ and $(A_1,\dots,A_d)\in\cX_\disj^{d}$,
	\begin{align*}
		\sum_{i=1}^d \lambda(A_i)\sum_{j=1}^i\E\left|\bZ_j^{A_{1:i}}\right|
		&=\sum_{i=1}^d \lambda(A_i)\sum_{j=1}^i
		\E\left[\xi\left(\eta+\Delta\left(\bX^{A_i}\right)\right)(A_j)
		-\xi(A_j)-\delta_{g\left(\bX^{A_i}\right)}(A_j)\right]
		\\
		&\leq\sum_{i=1}^d \lambda(A_i)
		\E\left[\xi\left(\eta+\Delta\left(\bX^{A_i}\right)\right)(\Y)-\xi(\Y)-1\right]
		\\
		&=\frac{1}{k!}\sum_{i=1}^d\int_D\I\{g(\bx)\in A_i\}\E\left[
		\xi(\eta+\Delta(\bx))(\Y)-\xi(\Y)-1\right] \mu^k(d\bx)
		\\
		&\leq \lambda(\Y)\E\left[\xi\left(\eta+\Delta\left(\bX^\Y\right)\right)(\Y)-\xi(\Y)-1\right].
	\end{align*}
	Thus, Theorem~\ref{theo:2} gives
	\begin{equation}
	\label{eq:conclude1}
		d_\pi(\xi,\gamma)\leq2
		\lambda(\Y)\E\left[\xi\left(\eta+\Delta\left(\bX^\Y\right)\right)(\Y)-\xi(\Y)-1\right].
	\end{equation}
	From \eqref{eq:U-stat-coupl} with $i=1$ and $A_1=\Y,$ it follows that the random variable $\xi\left(\eta+\Delta\left(\bX^\Y\right)\right)(\Y)$ is the size bias coupling of $\xi(\Y)$.
	Property \eqref{eq:sizeBias-1d}	with $f$ being the identity function and simple algebraic computations yield
	\begin{equation}
	\label{eq:conclude2}
	\begin{split}
		\E\left[\xi\left(\eta+\Delta\left(\bX^\Y\right)\right)(\Y)-\xi(\Y)-1\right]
		&=\lambda(\Y)^{-1}\left\{\E\big[ \xi(\Y)^2\big]-\lambda(\Y)^2-\lambda(\Y)\right\}
		\\
		&=\lambda(\Y)^{-1}\left\{\Var(\xi(\Y))-\lambda(\Y)\right\}.
	\end{split}
	\end{equation}
	Moreover, \cite[Lemma 3.5]{MR3161465} gives  
	\begin{equation*}	
		\Var(\xi(\Y))-\lambda(\Y)
		\leq  \sum_{i=1}^{k-1} \frac{1}{k!} \binom{k}{i} R
		\leq \frac{2^{k}-1}{k!} R\,.
	\end{equation*}
	These inequalities combined with \eqref{eq:conclude1} and \eqref{eq:conclude2} deliver the assertion.
\end{proof}

\section*{Acknowledgements}

This research was supported by Swiss National Science Foundation (grant number\break 200021\_175584).
The authors would like to thank Chinmoy Bhattacharjee, Ilya Molchanov and Matthias Schulte for valuable comments.

	\bibliographystyle{abbrv}
	\bibliography{bib4}	
\end{document}